\documentclass[11pt]{article}
\usepackage{etex}
\usepackage{amsmath,amssymb,amsfonts,amsthm}
\usepackage{a4wide}
\usepackage{color}
\usepackage{verbatim}
\usepackage{epsfig,subfigure,epstopdf}
\usepackage[]{graphics}
\usepackage{graphicx,inputenc}
\usepackage{subfigure}
\usepackage[justification=centering]{caption}
\usepackage{pictex}
\usepackage{wrapfig}
\usepackage{multirow}
\usepackage[T1]{fontenc}
\usepackage{authblk}
\usepackage[textwidth=6.0in,textheight=9in]{geometry}

\usepackage[colorlinks,linktocpage,linkcolor=blue]{hyperref}
\usepackage{fancyhdr}

\usepackage{algorithm}
\usepackage{algorithmic}
\usepackage{enumerate}

\newtheoremstyle{exampstyle}
  {\topsep} 
  {\topsep} 
  {\itshape} 
  {} 
  {\bfseries} 
  {.} 
  {.5em} 
  {} 

\theoremstyle{exampstyle}

\numberwithin{equation}{section}
\newtheorem{theorem}{Theorem}
\newtheorem{lemma}{Lemma}[section]
\newtheorem{assumption}{Assumption}[section]

\allowdisplaybreaks
\usepackage{parskip}
\let\oldref\ref
\renewcommand{\ref}[1]{(\oldref{#1})}  
\renewcommand{\eqref}[1]{(\oldref{#1})}

\newbox\boxaddrone \newbox\boxaddrtwo

\def\N+{n\in\mathbb{N}^{+}}

\def\n{\partial{\overrightarrow{\bf n}}}

\def\1d{\mathcal{D}((-\Delta)^{\gamma+1})}

\begin{document}
\title{\large\textbf{A monotone iterative reconstruction method for an inverse drift problem in a two-dimensional parabolic equation}}
\author[1]{Liuying Zhang\thanks{zhangly8@sustech.edu.cn}}
\author[1]{Wenlong Zhang\thanks{zhangwl@sustech.edu.cn}}
\author[2,3]{Zhidong Zhang\thanks{zhangzhidong@mail.sysu.edu.cn}}
\affil[1]{\small{Department of Mathematics, Southern University of Science and Technology, Shenzhen, 518055, China}}
\affil[2]{\small{School of Mathematics (Zhuhai), Sun Yat-sen University, Zhuhai 519082, Guangdong, China}}
\affil[3]{\small{Guangdong Province Key Laboratory of Computational Science, Sun Yat-sen
University, Guangzhou 510000, Guangdong, China}}

\maketitle

\begin{abstract}
\noindent 
We study an inverse drift problem for a two-dimensional parabolic equation on
the unit square with mixed boundary conditions, where the drift coefficient
is recovered from terminal observation data $g=u(\cdot,T)$. 
A monotone operator is constructed whose fixed point coincides with the unknown drift, 
yielding uniqueness in an admissible class and a constructive iterative reconstruction scheme.
Numerical experiments illustrate the monotone convergence and the effectiveness of the proposed method, and show that it remains effective for noisy terminal data under the denoising strategy.\\

\noindent Keywords: inverse drift problem, uniqueness, monoticity, iterative algorithm.  \\

\noindent AMS Subject Classifications: 35R30, 35K20, 65M32  
\end{abstract}


\section{Introduction.}
\subsection{Mathematical statement.}
We consider an inverse drift problem associated with the following parabolic equation:
\begin{equation}\label{PDE}
 \begin{cases}
  \begin{aligned}
   (\partial_t-\Delta+q(x)\partial_{x_1}+C_p)u(x,t)&=f(x), &&(x,t)\in \Omega\times(0,T];\\
   \mathcal Bu(x,t)&=b(x,t),&&(x,t)\in\partial\Omega\times(0,T];\\
   u(x,0)&=u_0(x),&&x\in\Omega.\\
  \end{aligned}
 \end{cases}
\end{equation}
The domain $\Omega$ is the unit square $(0,1)\times(0,1)$ in $\mathbb R^2$ and $x\in\Omega$ could be denoted by $(x_1,x_2)$.  
For the boundary $\partial\Omega$, we denote  
$$l_1=[0,1]\times\{0\},\quad l_2=\{1\}\times(0,1),\quad l_3=[0,1]\times\{1\},\quad l_4=\{0\}\times(0,1).$$
Then the boundary condition $\mathcal Bu(x,t)=b(x,t)$ is set as 
\begin{equation}
\begin{aligned}
 u&=b_1(x,t)\text{ on } l_1, &&u=b_3(x,t)\text{ on } l_3, \\
 \frac{\partial u}{\n}&=b_2(x,t) \text{ on } l_2, && \frac{\partial u}{\n}=b_4\text{ on } l_4,
 \end{aligned}
\end{equation}
where $b_4$ is a constant. Moreover, the source $f$, the initial condition $u_0$ and the potential constant $C_p$ are given; the space dependent drift coefficient $q(x)$ w.r.t $x_1$ direction is unknown.  

In the inverse problem of this work, the used measurement is the final time data of the solution, stated as  
\begin{equation}\label{data}
g(x):=u(x,T).
\end{equation}
Hence, the inverse problem investigated in this article could be described as follows: 
\begin{equation}\label{inverse_problem}
 \text{using data \eqref{data}}\ \text{to recover the drift}\ q(x)\ \text{in equation \eqref{PDE}}. 
\end{equation}

\subsection{Background and literature.}
Parabolic equations with drift terms arise naturally in mathematical models describing combined transport and diffusion processes. In statistical physics, such equations appear in Fokker-Planck models, where the drift represents deterministic transport induced by external forces and governs the evolution of probability densities \cite{risken1989}.
In fluid mechanics and mass transfer theory, convection-diffusion equations describe the interplay between molecular diffusion and advective transport
caused by fluid motion \cite{cussler2009}.
Drift-driven diffusion models also arise in population dynamics and chemotaxis,
where directed movement of organisms is influenced by environmental signals.
Furthermore, in financial mathematics, transformed parabolic equations
such as the Black-Scholes model contain drift terms representing deterministic
trends in asset price dynamics \cite{black1973}.
In these applications, the drift coefficient plays a crucial role in the system dynamics 
but is often not directly measurable. 
This naturally leads to inverse problems concerned with reconstructing the drift from available measurement data.
Inverse problems for convection-diffusion equations, including drift identification problems,
have been widely investigated under various observation settings, for example, \cite{isakov2006,cheng2009,li2013}.

From an analytical perspective, inverse drift reconstruction is challenging
due to the nonlinear dependence of the solution on the unknown coefficient and 
the instability of the problem with respect to data perturbations.
To overcome these difficulties, monotonicity-based operator methods provide a natural framework for inverse drift problems: one constructs a forward operator whose fixed points coincide with the unknown coefficient, and monotonicity derived from maximum principles yields both
uniqueness and a natural iterative reconstruction scheme. 
For parabolic inverse problems, early uniqueness results with final-time overdetermination can be traced back to classical works such as \cite{jones1962,frank1963,rundell1987,isakov1991}, while the monotonicity-based framework was surveyed in \cite{duchateau1995,tamburrino2006}.
A fixed-point monotonicity formulation was further developed for
time-dependent coefficients in fractional diffusion equations in
\cite{zhang2016,zhang2017,zhang2022}, where the unknown coefficient is characterized as a fixed point of a monotone operator, and the same structure leads to an iterative reconstruction scheme.
For inverse drift reconstruction, related terminal-observation results for one dimensional coefficients were obtained in \cite{CZZ2025}.
Other inverse problems for drift or convection terms in parabolic equations have also been studied under various observation settings; see, for example, \cite{deng2008,deng2011,bellassoued2021,sahoo2019}.
The present work extends this direction to a two-dimensional parabolic equation with mixed boundary conditions.

\subsection{Main results and outline.}

In this work we first construct an operator $K$ (see \eqref{operator}) whose one of fixed points is the unknown drift $q(x)$. Then we prove several properties for this operator, such as the well-definedness, equivalence and monotonicity. The uniqueness of inverse problem \eqref{inverse_problem} is established by showing the uniqueness of fixed points of $K$, and stated by Theorem \ref{theorem_uniqueness}. The proof of Theorem \ref{theorem_uniqueness} contains a specific iteration converging to the solution of inverse problem, which could be regarded as one advantage of such monotone operator method. From this it is natural to deduce the iterative algorithm to solve inverse problem \eqref{inverse_problem} numerically. The numerical results in Section \ref{section_num} demonstrate that the proposed method is effective and yields satisfactory reconstructions.   

The remainder of this article is structured as follows. In Section \ref{section_pre}, some preliminary results are collected and proved. For instance, the well-posedness of the forward problem of equation \eqref{PDE}, the maximum principles of elliptic and parabolic equations and the positivity results of equation \eqref{PDE}. We prove the uniqueness of inverse problem \eqref{inverse_problem} in Section \ref{section_unique}, using the operator $K$ defined in \eqref{operator}. Finally, several numerical experiments are taken in Section \ref{section_num} and the related numerical results are presented.

\section{Preliminaries.}\label{section_pre}
\subsection{Well-posedness of the forward problem.}


We now establish the well-posedness of equation \eqref{PDE}, including existence, uniqueness, and regularity of the solution. Our analysis follows the well-posedness framework of Theorem 2.1 in \cite{TDQ2016}, which is based on the classical parabolic theory developed in \cite{GS1997}.
In contrast to \cite{TDQ2016}, where a mixed boundary value problem with
time-dependent boundaries is considered under the divergence-free condition
$\nabla \cdot (q(x),0)=0$, we restrict ourselves to a fixed spatial domain with
time-independent boundary data and assume only $q \in L^\infty(\Omega)$.
Under these assumptions, the well-posedness result remains valid.
For completeness, we sketch the standard lifting and energy argument adapted to the present setting. For a boundary segment $\Gamma\subset\partial\Omega$, we write
\[
\Gamma_T:=\Gamma\times(0,T),\qquad
H^{r,s}(\Gamma_T):=L^2(0,T;H^r(\Gamma))\cap H^s(0,T;L^2(\Gamma)),
\]
with $r,s \in \mathbb{R}$.

\begin{lemma}\label{lemma:basic-wellposedness}
  Let $Q_T = \Omega\times(0,T]$. Assume that the source term satisfies $f \in L^2(\Omega)$, the initial data $u_0 \in H^1(\Omega)$, and the drift coefficient $q \in L^\infty(\Omega)$. The boundary data satisfy:
  \begin{itemize}
    \item $b_1, b_3 \in H^{3/2,\,3/4}(\Sigma_D)$, where $\Sigma_D = (l_1 \cup l_3) \times (0,T)$;
    \item $b_2, b_4 \in H^{1/2,\,1/4}(\Sigma_N)$, where $\Sigma_N = (l_2 \cup l_4) \times (0,T)$,
  \end{itemize}
  together with the compatibility conditions  
  $u_0(x) = b_1(x,0)$ on $l_1$ and $u_0(x) = b_3(x,0)$ on $l_3$.
  Then the forward problem of equation \eqref{PDE} admits a unique solution $u$ such that:
  $$\partial_t u \in L^2(Q_T), 
    \quad 
    u \in C([0,T]; H^1(\Omega)).$$
\end{lemma}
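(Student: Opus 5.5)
We reduce to homogeneous boundary data by a lifting, following Theorem 2.1 in \cite{TDQ2016} and the trace/lifting theory of \cite{GS1997}. Since $b_1,b_3\in H^{3/2,3/4}(\Sigma_D)$ and $b_2,b_4\in H^{1/2,1/4}(\Sigma_N)$, the anisotropic trace theorem gives a function $w\in H^{2,1}(Q_T)=L^2(0,T;H^2(\Omega))\cap H^1(0,T;L^2(\Omega))$ with $w=b_1$ on $l_1$, $w=b_3$ on $l_3$, $\partial w/\n=b_2$ on $l_2$ and $\partial w/\n=b_4$ on $l_4$. We may also arrange that $w(\cdot,0)$ is consistent with $u_0$ on $l_1\cup l_3$, which is where the compatibility conditions $u_0=b_1(\cdot,0)$ and $u_0=b_3(\cdot,0)$ enter. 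On the unit square the only delicate points are the four corners, where a Dirichlet side meets a Neumann side at a right angle. There we lift separately near each side using cutoffs and reflect evenly across the Neumann side. The $H^{3/2}$/$H^{1/2}$ data levels are exactly the ones the trace theorem needs.

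Set $v=u-w$. Then $v$ solves
\[
(\partial_t-\Delta+q\partial_{x_1}+C_p)v=F:=f-(\partial_t-\Delta+q\partial_{x_1}+C_p)w,
\]
with homogeneous Dirichlet data on $l_1\cup l_3$, homogeneous Neumann data on $l_2\cup l_4$, and $v(\cdot,0)=u_0-w(\cdot,0)\in H^1(\Omega)$. This initial value vanishes on $l_1\cup l_3$, so it lies in $V:=\{\varphi\in H^1(\Omega):\varphi=0 \text{ on } l_1\cup l_3\}$. Because $q\in L^\infty$ and $w\in H^{2,1}(Q_T)$, we have $F\in L^2(Q_T)$.

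Existence for $v$ comes from a Faedo--Galerkin scheme. The basis consists of the eigenfunctions of $-\Delta$ with the mixed conditions; these are explicit products of sines in $x_2$ and cosines/sines in $x_1$, and they are orthogonal in both $L^2$ and $V$. We need two estimates.
\begin{itemize}
\item \emph{First estimate.} Testing with $v$ gives
\[
\tfrac12\tfrac{d}{dt}\|v\|^2+\|\nabla v\|^2\le \|q\|_\infty\|\partial_{x_1}v\|\|v\|+|C_p|\|v\|^2+\|F\|\|v\|.
\]
Young's inequality and Gronwall's lemma then yield $v\in L^\infty(0,T;L^2)\cap L^2(0,T;V)$.
\item \emph{Second estimate.} Testing with $\partial_t v$ gives
\[
\|\partial_t v\|^2+\tfrac12\tfrac{d}{dt}\|\nabla v\|^2\le(\|q\|_\infty\|\partial_{x_1}v\|+|C_p|\|v\|+\|F\|)\|\partial_t v\|.
\]
The boundary terms vanish because $\partial_t v=0$ on the Dirichlet sides and $\partial v/\n=0$ on the Neumann sides. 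Absorbing $\|\partial_t v\|^2$ and applying Gronwall bounds $\partial_t v$ in $L^2(Q_T)$ and $\nabla v$ in $L^\infty(0,T;L^2)$, in terms of $\|v(0)\|_{H^1}$, $\|F\|_{L^2(Q_T)}$ and $\|q\|_\infty$.
\end{itemize}
Passing to the limit in the Galerkin system is routine since the equation is linear. This gives $v\in L^\infty(0,T;V)$ with $\partial_t v\in L^2(Q_T)$.

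To upgrade to $v\in C([0,T];H^1)$, we read the equation as an elliptic mixed problem for a.e.\ $t$: $-\Delta v+v=F-\partial_t v-q\partial_{x_1}v+(1-C_p)v\in L^2$. Regularity for the mixed problem on the square gives $v\in L^2(0,T;H^2)$. The right angles at the Dirichlet--Neumann corners cause no singularity, which we see by even reflection across the Neumann sides, turning the problem into a pure Dirichlet problem on a rectangle. Then $v\in L^2(0,T;H^2)\cap H^1(0,T;L^2)\subset C([0,T];H^1)$ by interpolation (Lions--Magenes). Thus $u=v+w$ has the stated regularity, since $w\in H^{2,1}(Q_T)$ embeds in the same way.

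Uniqueness follows by applying the first energy estimate to the difference of two solutions, which has zero data, and using Gronwall. The main obstacle is the lifting step near the four corners. We must produce $w\in H^{2,1}$ that simultaneously matches Dirichlet data on one side and Neumann data on the adjacent side, compatibly with $u_0$. We first try a partition of unity together with even reflection across the Neumann side, reducing to the half-space lifting of \cite{GS1997}. If corner compatibility at that regularity fails, we fall back to the weaker requirement that $w$ only lies in $H^1(0,T;L^2)\cap L^2(0,T;H^1)$ with $\Delta w$ replaced by its weak form. In that case $F$ becomes an $L^2(0,T;V')$ functional plus an $L^2$ part, and the second energy estimate must be rechecked.
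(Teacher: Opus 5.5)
The step you single out as the main obstacle is a genuine gap, and under the lemma's hypotheses it fails. Suppose $w\in H^{2,1}(Q_T)$ matched both kinds of data near the corner $(0,0)$. Then for a.e.\ $t$ the function $\partial_{x_1}w(\cdot,t)$ would lie in $H^1$ there, with traces $\partial_{x_1}b_1$ on $l_1$ and $-b_4$ on $l_4$. The gluing condition for traces of $H^1$ functions at a corner would then force
\[
\int_0^T\!\!\int_0^{\delta}\bigl|\partial_{x_1}b_1(s,0,t)+b_4(0,s,t)\bigr|^2\,\frac{ds}{s}\,dt<\infty ,
\]
and analogous conditions would be needed for $\partial_{x_1}b_1-b_2$, $\partial_{x_1}b_3-b_2$ and $\partial_{x_1}b_3+b_4$ at the other three corners. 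Nothing in $b_1,b_3\in H^{3/2,3/4}$, $b_2,b_4\in H^{1/2,1/4}$ implies this. For instance, with $b_1\equiv0$ and $b_4\equiv-1$ near $(0,0)$ the integral diverges. The model Laplace problem at that corner is then solved by $\frac{2}{\pi}\,\mathrm{Im}(z\log z)$, $z=x_1+ix_2$, whose second derivatives are of size $|x|^{-1}\notin L^2$.

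Even reflection across the Neumann side cannot repair this, because it preserves $H^2$ only when the Neumann datum vanishes. Your fallback does not close either. With the Neumann data kept in weak form, testing with $\partial_t v$ produces $\int_{l_2\cup l_4} b_N\,\partial_t v\,dS$ and $(\nabla w,\nabla\partial_t v)$, and $\|\partial_t v\|_{L^2(\Omega)}$ controls neither. Moving the time derivative onto the data would need $\partial_t b_N$, which $H^{1/4}$ regularity in time does not provide.

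The paper avoids the mixed-corner lifting by lifting only the Dirichlet data on the disjoint sides $l_1$ and $l_3$. (Its extra demand $U_D=0$ on $l_2\cup l_4$ would force $b_1=b_3=0$ at the corners and is best dropped.) It then treats the Neumann data as a boundary integral, controlled by the trace inequality in the $L^2$ estimate. However, its $\partial_t$-estimate is only asserted, and it runs into exactly the boundary term above.

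The cleanest repair of your argument is to add the four corner compatibility conditions to the hypotheses. The same matching is implicitly required for the conclusion $\partial_{x_1}u\in C(\overline\Omega\times(0,T])$ in Lemma \ref{lemma:continuity_ux1}, since the Dirichlet data of \eqref{PDE_x1} must be continuous at the corners. It also holds for the numerical boundary data, which are the traces of $a(t)e^{x_1}$.

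Under these conditions, build $w$ in two steps. First lift $b_1$ and $b_3$. Then lift the residual Neumann data: they now vanish at the corners in the $H^{1/2}_{00}$ sense, so they can be extended by odd reflection across the Dirichlet side (not even reflection across the Neumann side). With this lifting, the rest of your proof goes through: the homogeneous mixed problem, the boundary-term-free $\partial_t v$ test, and the embedding $v\in L^2(0,T;H^2)\cap H^1(0,T;L^2)\subset C([0,T];H^1)$. It is then more complete than the paper's sketch, which does not justify the continuity into $H^1$.
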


\begin{proof}
    We establish the well-posedness via a lifting argument and energy estimates.
    To handle the non-homogeneous Dirichlet boundary conditions, we introduce a
    lifting function $U_D$ that satisfies the prescribed boundary data and
    define $w = u - U_D$. More precisely, let $U_D$ be a function such that
    \begin{itemize}
      \item $U_D = b_1$ on $l_1$ and $U_D = b_3$ on $l_3$,
      \item $U_D = 0$ on $l_2 \cup l_4$,
      \item $U_D \in L^2(0,T; H^2(\Omega)) \cap H^1(0,T; L^2(\Omega))$.
    \end{itemize}
    The existence of such a lifting function follows from standard trace and extension results for parabolic problems;
    see, for instance, \cite{LSU1968}.
    
    Then $w$ satisfies homogeneous Dirichlet boundary
    conditions on $l_1 \cup l_3$ and
    $$
    \partial_t w - \Delta w + q(x)\partial_{x_1} w + C_p w
    = F \quad \text{in } \Omega \times (0,T],
    $$
    where
    $$
    F = f - \partial_t U_D + \Delta U_D - q(x)\partial_{x_1} U_D - C_p U_D.
    $$
    On $l_2 \cup l_4$, $w$ satisfies the Neumann condition
    $$
    \frac{\partial w}{\n}
    = b_N - \frac{\partial U_D}{\n}, \quad b_N=b_2 \,\text{on}\, l_2,\, b_N=b_4 \,\text{on} \,l_4.
    $$
    
    Multiplying the equation for $w$ by $w$ and integrating over $\Omega$ yields
    $$
    \int_\Omega w \partial_t w \, dx
    - \int_\Omega w \Delta w \, dx
    + \int_\Omega q(x) w \partial_{x_1} w \, dx
    + C_p \int_\Omega w^2 \, dx
    = \int_\Omega F w \, dx.
    $$
    
    Using integration by parts, the boundary condition, we obtain
    $$
    \frac{1}{2}\frac{d}{dt}\|w\|_{L^2(\Omega)}^2
    + \|\nabla w\|_{L^2(\Omega)}^2
    + C_p \|w\|_{L^2(\Omega)}^2
    = \int_\Omega F w \, dx
    - \int_\Omega q(x) w \partial_{x_1} w \, dx
    + \int_{l_2 \cup l_4}
    \Big(b_N - \frac{\partial U_D}{\n}\Big) w \, dS.
    $$
    
    The source term is estimated using Young's inequality:
    $$
    \left|\int_\Omega F w \, dx\right|
    \le \frac{1}{2}\|F\|_{L^2(\Omega)}^2
    + \frac{1}{2}\|w\|_{L^2(\Omega)}^2.
    $$
    
    Since $q \in L^\infty(\Omega)$, the drift term satisfies
    $$
    \left|\int_\Omega q(x) w \partial_{x_1} w \, dx\right|
    \le \frac{1}{2}\|\nabla w\|_{L^2(\Omega)}^2
    + \frac{\|q\|_{L^\infty(\Omega)}^2}{2}\|w\|_{L^2(\Omega)}^2.
    $$
    
    The boundary term on $l_2 \cup l_4$ is estimated by the trace theorem and Young's inequality: For any $\varepsilon>0$
    $$
    \left|\int_{l_2 \cup l_4}
    \Big(b_N - \frac{\partial U_D}{\n}\Big) w \, dS\right|
    \le \varepsilon \|\nabla w\|_{L^2(\Omega)}^2
    + C_\varepsilon\Big(\|b_N\|_{H^{1/2,1/4}(\Sigma_N)}^2
    + \|U_D\|_{L^2(0,T;H^2(\Omega))}^2\Big),
    $$
    
    Combining the above estimates yields
    $$
    \frac{d}{dt}\|w\|_{L^2(\Omega)}^2
    + \|\nabla w\|_{L^2(\Omega)}^2
    \le C_1 \|w\|_{L^2(\Omega)}^2 + \|F\|_{L^2(\Omega)}^2,
    $$
    where $C_1>0$ depends on $\|q\|_{L^\infty(\Omega)}$ and $C_p$.
    By Gronwall's inequality,
    $$
    \|w(t)\|_{L^2(\Omega)}^2
    \le e^{C_1 t}
    \left(
    \|w(0)\|_{L^2(\Omega)}^2
    + \int_0^t \|F(s)\|_{L^2(\Omega)}^2 \, ds
    \right),
    $$
    with $w(0) = u_0 - U_D(0)$.
    
    Higher regularity, in particular $\partial_t w \in L^2(Q_T)$, follows by
    testing the equation with $\partial_t w$ and applying standard energy
    arguments for parabolic equations. Consequently,
    $w \in C([0,T];H^1(\Omega))$ and $\partial_t w \in L^2(Q_T)$.
    Since $u = w + U_D$, the asserted regularity for $u$ follows.
    Uniqueness is a direct consequence of linearity and the energy estimate. The proof is complete.
\end{proof}

In addition to the fundamental well-posedness result in Lemma \ref{lemma:basic-wellposedness}, we next investigate the continuity of spatial derivatives of the solution, which plays a crucial role in the subsequent analysis of the inverse problem.
Differentiating equation \eqref{PDE} with respect to $x_1$ leads to the equation for $w = \partial_{x_1}u$ as
  \begin{equation}\label{PDE_x1}
    \begin{cases}
     \begin{aligned}
      (\partial_t-\Delta+q(x)\partial_{x_1}+(C_p+\partial_{x_1}q))w(x,t)&=\partial_{x_1}f, &&(x,t)\in \Omega\times(0,T];\\
      w(x,t)&=\partial_{x_1}b_1,&&(x,t)\in l_1\times(0,T];\\
      w(x,t)&=b_2,&&(x,t)\in l_2\times(0,T];\\
      w(x,t)&=\partial_{x_1}b_3,&&(x,t)\in l_3\times(0,T];\\
      w(x,t)&=-b_4,&&(x,t)\in l_4\times(0,T];\\
      w(x,0)&=\partial_{x_1}u_0(x),&&x\in\Omega.
     \end{aligned}
    \end{cases}
   \end{equation}

The regularity of $w$ is established using the boundary De Giorgi theory for parabolic equations with Dirichlet boundary conditions.
Although the original problem \eqref{PDE} is subject to mixed boundary
conditions, the differentiated problem \eqref{PDE_x1} is equipped with
Dirichlet boundary data for the new unknown $w$ on the whole lateral
boundary.
In particular, we use the qualitative consequence of the boundary
continuity theorem, namely that functions in a boundary De Giorgi class
with continuous Dirichlet data are continuous up to the lateral boundary.
A representative statement of this type is given by
\cite[Theorem~7.1, Chapter~10]{DEG2023}; see also the corresponding
parabolic boundary continuity results in
\cite{L1996,lieberman1993,dibenedetto2012}.
Under the assumptions stated below,
Since $\Omega=(0,1)^2$ automatically satisfies the required geometric
density condition, and the differentiated equations below are linear
parabolic equations with bounded coefficients and continuous Dirichlet
boundary data, their weak solutions belong to the corresponding
parabolic boundary De Giorgi classes. Therefore the boundary continuity
theory applies.

\begin{lemma}\label{lemma:continuity_ux1}
    Let $w = \partial_{x_1}u$ satisfy equation (\ref{PDE_x1}). Assume that $q\in W^{1,\infty}(\Omega)$, $f\in W^{1,\infty}(\Omega)$ with $\partial_{x_1}f\ge 0$,
    and $u_0 \in H^1(\Omega)$ with $\partial_{x_1}u_0\ge 0$. 
    Assume further that the Dirichlet boundary data in (\ref{PDE_x1}) are continuous on the corresponding portions of $\partial\Omega$.
    Then $w$ belongs to the boundary De Giorgi class and consequently
    $$
    w\in C(\overline{\Omega}\times(0,T]),
    $$
    and in particular $w(\cdot,T)\in C(\overline{\Omega})$.
\end{lemma}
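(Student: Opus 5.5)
The plan is to treat (\oldref{PDE_x1}) as a linear parabolic equation for $w$ with bounded coefficients and continuous Dirichlet data on the whole lateral boundary. We then show that $w$ is a weak solution in the energy class, verify the De Giorgi-type Caccioppoli inequalities for truncations $(w-k)_\pm$, both interior and near the lateral boundary, and invoke the boundary continuity theorem \cite[Theorem~7.1, Chapter~10]{DEG2023} to conclude continuity up to $\partial\Omega\times(0,T]$.

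\textbf{Step 1: regularity of $w$.} Since $q\in W^{1,\infty}(\Omega)$ and $f\in W^{1,\infty}(\Omega)$, I would differentiate the weak formulation of (\oldref{PDE}) in $x_1$ using difference quotients in the $x_1$-direction. From Lemma \ref{lemma:basic-wellposedness} we already have $u\in C([0,T];H^1(\Omega))$ and $\partial_t u\in L^2(Q_T)$. The difference-quotient estimates, which use only $\|q\|_{W^{1,\infty}}$, $\|f\|_{W^{1,\infty}}$ and $\|u_0\|_{H^1}$, would give
\[
w=\partial_{x_1}u\in L^2(0,T;H^1(\Omega))\cap C([0,T];L^2(\Omega)).
\]
Moreover, $w$ is a weak solution of (\oldref{PDE_x1}). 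All its coefficients lie in $L^\infty$: the drift $q$ is bounded, and the zeroth-order term $C_p+\partial_{x_1}q$ is bounded. The right-hand side $\partial_{x_1}f$ is also bounded.

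\textbf{Step 2: the lateral boundary values of $w$.}
\begin{itemize}
\item On $l_2$ and $l_4$, the normal direction is $\pm x_1$, so $w=b_2$ and $w=-b_4$ follow directly from the Neumann conditions.
\item On $l_1$ and $l_3$, $\partial_{x_1}$ is tangential, so $w=\partial_{x_1}b_1$ and $w=\partial_{x_1}b_3$ in the trace sense.
\end{itemize}
By hypothesis this combined Dirichlet datum is continuous. The compatibility at the corners is implicitly contained in that assumption.

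\textbf{Step 3: De Giorgi class membership.} I would test the equation with $(w-k)_\pm\zeta^2$, where $\zeta$ is a cutoff on a parabolic cylinder $B_\rho(x_0)\times(t_0-\rho^2,t_0]$ with $x_0\in\overline\Omega$. Near the boundary the levels are restricted to $k\ge\sup_{\text{cyl}\cap\partial\Omega} w$ for $(w-k)_+$, and analogously for $(w-k)_-$, so that the test function vanishes on the lateral boundary. The first-order term $q\,\partial_{x_1}w\,(w-k)_\pm\zeta^2$ is absorbed by Young's inequality into the gradient term. The zeroth-order and source terms are bounded by $C\big(|w|+|\partial_{x_1}f|\big)$ on the set $\{(w-k)_\pm>0\}$, and they are handled as the standard lower-order contributions that De Giorgi classes allow. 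This step requires first knowing that $w$ is locally bounded, which follows from the same energy inequalities and the De Giorgi iteration itself. The result is the Caccioppoli inequalities defining the boundary De Giorgi class. Because $\Omega=(0,1)^2$ satisfies the uniform exterior density (positive geometric density) condition at every boundary point, including the corners, \cite[Theorem~7.1, Chapter~10]{DEG2023} applies. Together with interior H\"older continuity, it gives $w\in C(\overline\Omega\times(0,T])$, and in particular $w(\cdot,T)\in C(\overline\Omega)$.

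\textbf{Main obstacle.} I expect Step 1 to be the delicate point: rigorously justifying that $\partial_{x_1}u$ has the required energy regularity and the stated traces. The difficulty is worst near the corners, where Dirichlet and Neumann sides meet and the $x_1$-difference quotients must stay inside $\Omega$. My first attempt would be an even reflection of $u$ across $l_2$ and $l_4$, since these are Neumann sides that are normal to $x_1$, with the data adjusted accordingly. Combined with tangential difference quotients along $l_1$ and $l_3$, this should reduce the corner analysis to a half-space situation. The hypotheses $\partial_{x_1}f\ge0$ and $\partial_{x_1}u_0\ge0$ play no role in the continuity argument. They are only relevant for the positivity results used later.
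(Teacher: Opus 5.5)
Your proposal is correct and follows the paper's route. The paper treats \eqref{PDE_x1} as a linear parabolic problem with bounded coefficients and continuous Dirichlet data on the whole lateral boundary, asserts that its weak solution lies in the boundary De Giorgi class, and applies \cite[Theorem~7.1, Chapter~10]{DEG2023} using the density condition of the square; your Steps 1--3 fill in what the paper takes for granted (energy regularity of $\partial_{x_1}u$ via reflection across $l_2,l_4$, the traces of $w$, the Caccioppoli inequalities for $(w-k)_\pm$), including the corner compatibility of the data, which the continuity conclusion genuinely needs.
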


Having established the continuity of the spatial derivative $\partial_{x_1}u$ in Lemma \ref{lemma:continuity_ux1}, we next consider the mixed derivative $\partial^2_{x_1t}u$, which is crucial for the convergence analysis of 
the fixed point iteration. 
The model of $w=\partial^2_{x_1t}u$ is written as 
 \begin{equation}\label{PDE_x1t}
 \begin{cases}
  \begin{aligned}
   (\partial_t-\Delta+q(x)\partial_{x_1}+((C_p+\partial_{x_1}q)))w(x,t)&=0, &&(x,t)\in \Omega\times(0,T];\\
   w(x,t)&=\partial^2_{x_1t}b_1,&&(x,t)\in l_1\times(0,T];\\
   w(x,t)&=\partial_tb_2,&&(x,t)\in l_2\times(0,T];\\
   w(x,t)&=\partial^2_{x_1t}b_3,&&(x,t)\in l_3\times(0,T];\\
   w(x,t)&=0,&&(x,t)\in l_4\times(0,T],  
  \end{aligned}
 \end{cases}
\end{equation}
with 
$$ w(x,0)=\partial_{x_1}f+\Delta(\partial_{x_1}u_0)-q(x)\partial^2_{x_1x_1}u_0-(C_p+\partial_{x_1}q)\partial_{x_1}u_0,\quad x\in\Omega.$$ 

As in the previous case, the regularity of $w$ follows from the boundary De Giorgi class theory for parabolic equations with Dirichlet data.
The corresponding continuity result is stated below.

\begin{lemma}\label{lemma:continuity_w_xt}
  Let $w=\partial^2_{x_1t}u$ satisfy equation (\ref{PDE_x1t}) 
  Assume that $q\in W^{1,\infty}(\Omega)$ and $C_p\ge 0$.
  Assume further that
  $$
  \partial^2_{x_1 t} b_1\in C(\overline{l_1} \times (0,T]), \quad
  \partial_t b_2\in C(\overline{l_2} \times (0,T]), \quad
  \partial^2_{x_1 t} b_3\in C(\overline{l_3} \times (0,T]).
  $$
  Then $w$ belongs to the boundary De Giorgi class associated with the Dirichlet data in \eqref{PDE_x1t}. Consequently,
  $$
  w\in C(\overline{\Omega}\times(0,T]).
  $$
  In particular, there exists a constant $M>0$, depending only on
  $\Omega$, $T$, $\|q\|_{W^{1,\infty}(\Omega)}$,
  $C_p$ and the boundary data, such that
  $$
  \|w(\cdot,T)\|_{L^\infty(\Omega)} \le M.
  $$
\end{lemma}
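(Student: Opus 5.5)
The plan follows the template of Lemma \ref{lemma:continuity_ux1}. We treat \eqref{PDE_x1t} as a linear parabolic problem with Dirichlet data on the whole lateral boundary. We first show that its weak solution lies in the parabolic boundary De Giorgi class, and then invoke the boundary continuity theorem \cite[Theorem~7.1, Chapter~10]{DEG2023} to get $w\in C(\overline{\Omega}\times(0,T])$. The bound on $w(\cdot,T)$ then follows from a separate maximum-principle (or $L^\infty$ De Giorgi) estimate.

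\textbf{Setting up the weak formulation.} First we justify that $w=\partial^2_{x_1t}u$ is a weak solution of \eqref{PDE_x1t}. Formally, differentiate \eqref{PDE_x1} in $t$; the coefficients $q$, $\partial_{x_1}q$, $C_p$ are time-independent and the source $\partial_{x_1}f$ is time-independent, so the right-hand side vanishes. To make this rigorous, we apply the energy argument of Lemma \ref{lemma:basic-wellposedness} to difference quotients $(\partial_{x_1}u(\cdot,t+h)-\partial_{x_1}u(\cdot,t))/h$ on $[\tau,T]$ with $\tau>0$. This gives uniform bounds in $L^\infty(\tau,T;L^2)\cap L^2(\tau,T;H^1)$, so $w$ belongs to the natural energy space away from $t=0$. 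Working on $(\tau,T]$ is also what allows the conclusion on $(0,T]$ without needing regularity of the initial datum for $w$, which involves second derivatives of $u_0$.

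\textbf{Membership in the De Giorgi class.} We test the equation with $\pm(w-k)_\pm\zeta^2$, where $\zeta$ is a space-time cutoff and the level satisfies $k\ge\sup$ of the boundary data on the cylinder (respectively $k\le\inf$ for the negative part). Near the lateral boundary these test functions vanish, since the data are $\partial^2_{x_1t}b_1$, $\partial_t b_2$, $\partial^2_{x_1t}b_3$, $0$, which are continuous by assumption. The resulting inequality contains three terms to control:
\begin{itemize}
\item the drift term $q\,\partial_{x_1}w\,(w-k)_\pm\zeta^2$, which is absorbed by Young's inequality using $q\in L^\infty$;
\item the zeroth-order term $(C_p+\partial_{x_1}q)w(w-k)_\pm\zeta^2$, which is bounded by $\|q\|_{W^{1,\infty}}+C_p$ and contributes lower-order terms $|k|\,|(w-k)_\pm|$ and $(w-k)_\pm^2$, both allowed in the standard class definition;
\item $w(w-k)_\pm$, which is written as $(w-k)_\pm^2 \pm k(w-k)_\pm$.
\end{itemize}
This yields the energy inequalities defining the boundary De Giorgi class. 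Since $(0,1)^2$ satisfies the exterior density condition, the boundary continuity theorem gives continuity up to the lateral boundary; interior continuity is the standard interior De Giorgi–Nash result.

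\textbf{The $L^\infty$ bound and the main obstacle.} The main obstacle is the uniform bound $M$, since the initial datum of $w$ is only in a weak space. We would first try a weak maximum principle on $\Omega\times[T/2,T]$, comparing $w$ with $\pm e^{\lambda t}N$, where $\lambda=\|\partial_{x_1}q\|_\infty$ absorbs a possibly negative zeroth-order coefficient. This gives $\|w(\cdot,T)\|_\infty\le e^{\lambda T/2}\max\{\|w(\cdot,T/2)\|_\infty,\ \text{boundary sup}\}$. The value $\|w(\cdot,T/2)\|_\infty$ is then controlled by the local $L^\infty$–$L^2$ De Giorgi estimate, whose constants depend only on $\|q\|_{W^{1,\infty}}$, $C_p$, $\Omega$ and $T$, applied to $\|w\|_{L^2(\Omega\times(T/4,T/2))}$. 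That $L^2$ norm is bounded by the energy estimate from the first step in terms of the data. If the instantaneous smoothing is hard to quantify, the fallback is to accept dependence of $M$ on $u_0$ and $f$ through the $L^2$ norm of $w$, which is implicitly included in ``the boundary data.''
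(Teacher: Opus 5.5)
Your proposal follows the same route as the paper, which only notes that \eqref{PDE_x1t} is a linear parabolic problem with bounded coefficients and Dirichlet data on the whole lateral boundary of the square (which satisfies the density condition) and then invokes the boundary De Giorgi continuity theorem. Your additional steps (time difference quotients on $(\tau,T]$, the truncated energy inequalities, and the comparison with $\pm e^{\lambda t}N$ combined with the local $L^\infty$--$L^2$ bound to quantify $M$) are consistent with this. You are right that $M$ must also depend on $u_0$ and $f$; since $w$ depends linearly on them, this dependence is unavoidable rather than a fallback.

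The one point that you, like the paper, pass over is continuity at the four corners. The boundary continuity theorem needs the lateral datum to be continuous on $\partial\Omega\times(0,T]$ as a single function, so continuity on each $\overline{l_j}$ separately is not enough. You also need the matching conditions $\partial^2_{x_1t}b_1(0,0,t)=\partial^2_{x_1t}b_3(0,1,t)=0$, $\partial^2_{x_1t}b_1(1,0,t)=\partial_tb_2(1,0,t)$ and $\partial^2_{x_1t}b_3(1,1,t)=\partial_tb_2(1,1,t)$. These should be read into the compatibility requirements of Assumption~\ref{assumption_regularity}.
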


From Lemmas~\ref{lemma:basic-wellposedness}--\ref{lemma:continuity_w_xt}, we could summarize some required assumptions as follows.  

\begin{assumption}\label{assumption_regularity}
  Throughout this paper, we impose the regularity and compatibility assumptions
  on the data required to ensure the well-posedness and regularity results
  established in Lemmas~\ref{lemma:basic-wellposedness}--\ref{lemma:continuity_w_xt}.
  In particular, the drift coefficient satisfies $q \in W^{1,\infty}(\Omega)$,
  the initial datum satisfies $u_0 \in H^1(\Omega)$,
  the source term and boundary data possess sufficient regularity
  and compatibility at $t=0$.
  These assumptions guarantee that all derivatives appearing in the
  differentiated problems are well defined.
\end{assumption}

\subsection{Positivity results.}

For the subsequent analysis, we recall the strong maximum principles for elliptic and parabolic equations, together with Hopf's lemma for elliptic equations, from Evans\cite[Chapter 6]{E2010}.
\begin{lemma}\label{lemma_elliptic_max}
  [Strong maximum principle for elliptic equations]
  Let $U \subset \mathbb{R}^n$ be a connected, open and bounded domain. Suppose $u \in C^2(U) \cap C(\overline{U})$ satisfies
  $$
  Lu := - \sum_{i,j=1}^n a_{ij}(x) \partial_{ij} u + \sum_{i=1}^n b_i(x) \partial_i u + c(x) u
  $$
  where $(a_{ij})$ is uniformly elliptic and $c \ge 0$. 
  \begin{enumerate}[(i)]
    \item If $Lu\le 0$ in $U$ and $u$ attains a nonnegative maximum over $\overline{U}$ at an interior point of $U$, then $u$ is constant in $U$.
    
    \item If $Lu\ge 0$ in $U$ and $u$ attains a nonpositive minimum over $\overline{U}$ at an interior point of $U$, then $u$ is constant in $U$.
  \end{enumerate}
\end{lemma}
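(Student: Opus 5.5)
The plan is the classical argument from Evans: first prove the weak maximum principle and a strict-inequality lemma, then obtain the strong form through an auxiliary ball construction and Hopf's lemma. Statement (ii) follows from (i) applied to $-u$, since $L(-u)=-Lu\le 0$ and a nonpositive minimum of $u$ is a nonnegative maximum of $-u$. So we only prove (i).

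Assume $Lu\le 0$ in $U$ and set $M:=\max_{\overline U}u\ge 0$. Let $C:=\{x\in U: u(x)=M\}$, which is nonempty by hypothesis and relatively closed in $U$ by continuity. We argue by contradiction: suppose $u\not\equiv M$, so that $V:=\{x\in U: u(x)<M\}$ is a nonempty open set. Since $U$ is connected and $C$ is nonempty, $C$ is not open, so there is a point $y\in V$ with $\operatorname{dist}(y,C)<\operatorname{dist}(y,\partial U)$. Take the largest ball $B\subset V$ centred at $y$. Its boundary then touches $C$ at some $x^0\in U$, and we may shrink $B$ so that $\partial B\cap C=\{x^0\}$. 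On $B$ we have $u<M$, $u(x^0)=M$, and $x^0$ is an interior point of $U$.

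Next we apply Hopf's lemma on $B$. We need the version with $c\ge 0$ and $u(x^0)=M\ge 0$, which is why the maximum must be nonnegative. Take the barrier $v(x)=e^{-\lambda|x-y|^2}-e^{-\lambda r^2}$, where $r$ is the radius of $B$. Using uniform ellipticity $\sum a_{ij}\xi_i\xi_j\ge\theta|\xi|^2$ and the bounds on $b_i$ and $c$, one checks that $Lv\le 0$ on the annulus $R=B\setminus \overline{B_{r/2}(y)}$ once $\lambda$ is large enough. Since $u<M$ on $\partial B_{r/2}(y)$, for small $\varepsilon>0$ we have $u+\varepsilon v\le M$ on $\partial R$. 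Moreover $L(u+\varepsilon v-M)\le -cM\le 0$ in $R$, so the weak maximum principle gives $u+\varepsilon v\le M$ in $R$. We prove the weak maximum principle for $c\ge 0$ first, by the standard perturbation $u+\delta e^{\mu x_1}$, and we apply it to the positive part. Taking the outward normal derivative at $x^0$ yields $\partial_\nu u(x^0)\ge -\varepsilon\,\partial_\nu v(x^0)>0$.

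On the other hand, $x^0$ is an interior maximum point of $u\in C^2(U)$, so $Du(x^0)=0$. This contradicts $\partial_\nu u(x^0)>0$, and hence $C=U$, that is, $u$ is constant.

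The main obstacle is the barrier computation together with the use of the sign conditions $c\ge 0$ and $M\ge 0$. In Evans, boundedness of the coefficients is implicitly assumed; here we also assume $a_{ij},b_i,c\in L^\infty$ (continuity is not needed). If $c$ could change sign, the weak maximum principle step would fail, so we will state that assumption explicitly. Alternatively, since the lemma is classical, we may simply cite Evans, Section 6.4.2.
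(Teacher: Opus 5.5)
Your argument is correct and is essentially the proof in Evans, Section 6.4.2: a weak maximum principle, a Hopf-type barrier on an annulus, and a contradiction with $Du(x^0)=0$. The paper does not reprove this lemma; it simply cites that same source, so your route matches it. Shrinking $B$ so that $\partial B\cap C=\{x^0\}$ is harmless but unnecessary, because the Hopf step only uses $u<M$ in the open ball together with $u(x^0)=M$.
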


\begin{lemma}\label{lemma_parabolic_max}
  [Strong maximum principles for parabolic equations]
  Let $U \subset \mathbb{R}^n$ be a connected domain and $T>0$. 
  Define $U_T := U \times (0,T]$ and $\overline{U}_T := \overline{U} \times [0,T]$. 
  Suppose $u \in C^{2,1}(U_T) \cap C(\overline{U}_T)$ and $L$ is a uniformly elliptic operator of the same form as in Lemma\ref{lemma_elliptic_max} with $c\ge 0$ in $U_T$.  
  \begin{enumerate}[(i)]
    \item If $u_t+Lu\le 0$ in $U_T$ and $u$ attains a nonnegative maximum over $\overline{U}_T$ at a point $(x_0,t_0)\in U_T$, then $u$ is constant in $U\times[0,t_0]$.
    
    \item If $u_t+Lu\ge 0$ in $U_T$ and $u$ attains a nonpositive minimum over $\overline{U}_T$ at a point $(x_0,t_0)\in U_T$, then $u$ is constant in $U\times[0,t_0]$.
  \end{enumerate}
\end{lemma}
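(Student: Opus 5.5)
This statement is the classical strong maximum principle for parabolic equations, quoted from Evans; I would follow his approach. By replacing $u$ with $-u$, (ii) becomes (i), so I only need to prove (i).

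\emph{Step 1: a parabolic Hopf-type local lemma.} Suppose $u_t+Lu\le 0$ in $U_T$, and let $M:=u(x_0,t_0)\ge 0$ be the maximum of $u$ over $\overline U_T$. I first show a local statement. Let $B$ be a small ball, or a parabolic box, with closure in $U_T$, and suppose $u<M$ at some point of $B$ lying strictly below the top time. Then $u<M$ on the part of $B$ that lies forward in time from that point. To prove it, I would use a barrier of the form
\[
v(x,t)=e^{-\lambda\left(|x-y|^2+\alpha(t-s)^2\right)}-e^{-\lambda r^2},
\]
which is positive inside an ellipsoid and vanishes on its boundary. For $\lambda$ large (using uniform ellipticity and boundedness of the $b_i$), and staying away from the centre, one gets $v_t+Lv<0$. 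Because $c\ge0$ and $M\ge 0$, the term $cM$ has the right sign. Comparing $u$ with $M-\varepsilon v$ on an annular region via the weak maximum principle then gives a contradiction at the first time the set $\{u=M\}$ touches the ellipsoid. The key points are that the sign condition $M\ge0$ lets us write $(\partial_t+L)(u-M)\le -cM\le0$, and that only nonnegative $c$ is needed.

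\emph{Step 2: propagation.} Let $W:=\{(x,t)\in U\times(0,t_0] : u(x,t)<M\}$ and suppose it is nonempty. Two ingredients are needed.

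\begin{enumerate}[(a)]
\item \textbf{Spatial spread at fixed time.} At a fixed $t\le t_0$, if $u(\cdot,t)=M$ at some point and $u<M$ somewhere else on the same time slice, I connect the two points by a segment in $U$. I then place a ball touching $\{u=M\}$ from inside $\{u<M\}$, and the Hopf-type barrier in Step 1 yields a contradiction with $\partial_t u=0$ and $\nabla u=0$ at an interior maximum. Hence each slice $\{u(\cdot,t)=M\}$ is either empty or all of $U$. This step uses connectedness of $U$.
\item \textbf{Backward propagation in time.} If $u(x_1,t_1)<M$ with $t_1<t_0$, I consider the vertical segment from $(x_1,t_1)$ to $(x_1,t_0)$. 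Step 1, applied with small cylinders, forces $u<M$ along the whole segment. Together with (a), this contradicts $u(x_0,t_0)=M$ once I join $x_0$ to $x_1$ by a path.
\end{enumerate}

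Consequently $u\equiv M$ on $U\times(0,t_0]$, and by continuity on $U\times[0,t_0]$.

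The main obstacle is the barrier construction in Step 1 and the geometric argument in (b). The parabolic structure only allows information to propagate forward in time, so I must choose the ellipsoid (rather than a ball) so that its lower boundary lies in the region where $u<M$. Evans handles this by a careful choice of the centre and radius (via the parabolic "interior ball" tangent to the level set). I would follow that choice, with constants depending only on the ellipticity, $\|b\|_\infty$ and $\|c\|_\infty$ on compact subsets.
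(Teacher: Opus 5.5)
\textbf{Verdict: genuine gap.} The paper does not prove this lemma; it only cites Evans. There the parabolic strong maximum principle comes from the parabolic Harnack inequality: solve $v_t+Lv=0$ in a subcylinder with $v=u$ on its parabolic boundary, then apply Harnack to $M-v$. What you sketch is instead Nirenberg's barrier proof, and Evans does not contain the ``interior ball'' device you defer to. That route is viable, but as written it breaks at its hardest step: contact at a pole. With $\phi=|x-y|^2+\alpha(t-s)^2$,
\[
v_t+Lv=e^{-\lambda\phi}\Big[-2\lambda\alpha(t-s)-4\lambda^2\sum_{i,j}a_{ij}(x_i-y_i)(x_j-y_j)+2\lambda\sum_i a_{ii}-2\lambda\sum_i b_i(x_i-y_i)\Big]+cv .
\]
The dominant $\lambda^2$-term helps only where $|x-y|$ is bounded below, i.e.\ away from the vertical axis of the ellipsoid, not merely away from its centre. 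At the bottom pole ($x=y$, $t-s=-r/\sqrt{\alpha}$) you get $v_t+Lv=2\lambda e^{-\lambda r^2}\big(\sqrt{\alpha}\,r+\sum_i a_{ii}\big)>0$ for every $\lambda,\alpha$. The anisotropy can rescue the top pole, but never the bottom one.

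This is not a technicality. Suppose $u\equiv M$ for $t\le t^*$ and $u<M$ for $t>t^*$, which is compatible with $u_t+Lu\le 0$. Then a maximal ellipsoid in $\{u<M\}$ centred above $t^*$ touches $\{u=M\}$ exactly at its bottom pole, and there is no contradiction to extract. So your barrier proves only this: if $u<M$ in a ball $B$ and $u(P)=M$ for some $P\in\partial B$, then $P$ is the top or bottom point of $B$. Two of your steps tacitly assume non-polar contact. Your Step~1, as stated, is already the full local strong maximum principle. Your Step~2(a) says ``place a ball touching $\{u=M\}$ from inside $\{u<M\}$ \dots\ yields a contradiction''. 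You also cannot force the contact to be the equatorial point on your segment. The largest ball about a point of the segment that lies in $\{u<M\}$ may first meet $\{u=M\}$ directly above or below that point.

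\textbf{The missing idea} is the argument that turns the pole lemma into horizontal propagation. Let $P_1$ be the first point on the segment at level $t$ with $u=M$. For $P$ on the segment just before $P_1$, let $d(P)$ be the distance from $P$ to $\{u=M\}$. By the pole lemma the nearest point is $P\pm d(P)e_t$. Hence for $P'$ on the segment, $d(P')^2\le d(P)^2+|P-P'|^2$, and symmetrically. Thus $d^2$ has zero derivative along the segment, so it is constant and positive, contradicting $d(P)\to 0$ as $P\to P_1$.

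Only after this does Step~2(b) work, and it needs a different barrier. Let $(x_1,t^*)$ be the first point on the vertical line where $u=M$, and take $h=e^{-|x-x_1|^2-\alpha(t-t^*)}-1$ on the paraboloid $\{|x-x_1|^2<\alpha(t^*-t)\}$ below it. Its bottom disc lies in $\{u<M\}$; this is exactly what the horizontal lemma supplies. Comparison gives $u-M+\varepsilon h\le 0$ in that region, hence $u_t(x_1,t^*)\ge\varepsilon\alpha$. Together with $\nabla u=0$, $D^2u\le 0$ and $cM\ge 0$ at that point, this contradicts $u_t+Lu\le 0$. With these two lemmas, your chaining argument finishes the proof: move horizontally along a polygonal path in $U$, then upward.
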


\begin{lemma}\label{lemma_hopf}
    [Hopf's lemma]
    Let $U\subset\mathbb{R}^n$ be a bounded domain and let $L$ is a uniformly elliptic operator of the same form as in Lemma\ref{lemma_elliptic_max} with $c\ge 0$ in $U$.
    Assume that $u\in C^2(U)\cap C^1(\overline U)$ satisfies $Lu\le0$ in $U$.
    If $u$ attains a strict positive maximum at some boundary point
    $x^0\in\partial U$ where the interior ball condition holds, then
    \[
    \frac{\partial u(x^0)}{\n}>0.
    \] 
\end{lemma}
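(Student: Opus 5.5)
This is the classical Hopf boundary lemma, quoted from \cite[Chapter 6]{E2010}. I would follow the standard barrier argument: compare $u$ with a radial exponential barrier on an annulus inside the interior ball at $x^0$, then apply the weak maximum principle.

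\textbf{Setup and barrier.} By the interior ball condition there is a ball $B=B(y,r)\subset U$ with $x^0\in\partial B$. Since the maximum is strict, $u(x)<u(x^0)$ for all $x\in B$, and $u(x^0)>0$. Define
\[
v(x)=e^{-\lambda|x-y|^2}-e^{-\lambda r^2},\qquad x\in B,
\]
so that $v>0$ in $B$ and $v=0$ on $\partial B$. A direct computation using uniform ellipticity ($\sum a_{ij}\xi_i\xi_j\ge\theta|\xi|^2$) and boundedness of $a_{ij},b_i,c$ gives
\[
Lv\le e^{-\lambda|x-y|^2}\bigl(-4\theta\lambda^2|x-y|^2+2\lambda(\operatorname{tr}A+|b||x-y|)+c\bigr).
\]
Here the term $-c\,e^{-\lambda r^2}\le 0$ has been dropped because $c\ge 0$. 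On the annulus $R=B(y,r)\setminus \overline{B(y,r/2)}$ we have $|x-y|\ge r/2$, so choosing $\lambda$ large makes $Lv\le 0$ in $R$.

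\textbf{Comparison.} Set $w=u-u(x^0)+\varepsilon v$. Then
\[
Lw = Lu + \varepsilon Lv - c\,u(x^0)\le 0 \quad\text{in } R,
\]
using $Lu\le0$, $c\ge0$ and $u(x^0)\ge0$. On the inner sphere $\partial B(y,r/2)$, strict maximality and compactness give $u-u(x^0)\le-\delta<0$, so $w\le0$ there for small $\varepsilon>0$. On the outer sphere $\partial B(y,r)$ we have $v=0$ and $u\le u(x^0)$, so $w\le 0$ there as well. The weak maximum principle for $c\ge0$ (the weak form underlying Lemma~\ref{lemma_elliptic_max}), applied in the form ``$Lw\le0$ implies $\max_{\overline R} w\le \max_{\partial R} w^+$'', yields $w\le 0$ in $R$. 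Since $w(x^0)=0$, the point $x^0$ is a maximum of $w$ on $\overline R$, hence $\partial w(x^0)/\partial\nu\ge0$.

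\textbf{Conclusion.} This gives
\[
\frac{\partial u(x^0)}{\partial\nu}\ge -\varepsilon\frac{\partial v(x^0)}{\partial\nu}=\varepsilon\,2\lambda r e^{-\lambda r^2}>0 .
\]
Since $u\in C^1(\overline U)$ and the outward normal of $B$ at $x^0$ coincides with that of $U$, this is the claim.

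\textbf{Main obstacle.} The main care point is the sign bookkeeping with the zeroth-order term $c\ge0$. This is exactly why the hypothesis $u(x^0)>0$ (a nonnegative maximum) is needed, so that $-c\,u(x^0)\le0$. A second point is choosing $\lambda$ uniformly on the annulus, which is where the restriction $|x-y|\ge r/2$ enters.
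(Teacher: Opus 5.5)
Your proposal is correct and is essentially the same as the argument the paper relies on: the paper gives no proof of this lemma and simply cites Evans, Chapter 6, whose proof is exactly your comparison on the annulus $B(y,r)\setminus\overline{B(y,r/2)}$ with the barrier $v=e^{-\lambda|x-y|^2}-e^{-\lambda r^2}$ and the weak maximum principle for $c\ge 0$. Your handling of the zeroth-order term, using $u(x^0)\ge 0$ so that $-c\,u(x^0)\le 0$, is also the same as in that source.
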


Before proving the positivity results of solution $u$, we state several assumptions as follows.

 \begin{assumption}\label{assumption_positivity}
For equation \eqref{PDE} we assume that Assumption \ref{assumption_regularity} be valid. Moreover, the source $f$, initial condition $u_0$, boundary conditions $b_1, b_2, b_3, b_4$, the exact drift $q$ and the potential constant $C_p$ satisfy the following assumptions.  
  \begin{itemize}
   \item [$(a)$] $q\in C^1 (\Omega)$ and $\|q\|_{C^1(\Omega)}\le M$, where $M$ is a sufficiently large constant. 
  \item [$(b)$] $C_p>M$, where $M$ is given by $(a)$.
  \item [$(c)$] $\partial_{x_1}b_j\ge 0,\ \partial^2_{x_1t}b_j\ge 0,\ \partial_t b_j\ge 0$ on $l_j\times(0,T],\ j=1,3$.  
   \item [$(d)$] $b_2\ge 0,\ \partial_tb_2\ge 0,\ \partial_tb_2>0$ on $l_2\times(0,T]$, and the constant $b_4$ is strictly negative.    
  \item [$(e)$] $u_0\in C^3(\Omega)$, $\partial_{x_1}u_0(x)\ge 0$ on $\Omega$ and $C_0:=\max_{0\le j\le 3}\{\|u_0\|_{C^j(\Omega)}\}<\infty$.
  \item [$(f)$] $f\ge (1+M+C_p)C_0$ and $\partial_{x_1}f\ge (1+2M+C_p)C_0$ on $\Omega$. 
 \end{itemize}
\end{assumption}

\begin{lemma}\label{lemma_positive}
Under Assumptions \ref{assumption_positivity} and \ref{assumption_regularity}, we have the next positivity results for equation \eqref{PDE}.
\begin{itemize}
 \item [$(a)$] $\partial_{x_1} u\ge 0$ on $\Omega\times(0,T]$.
 \item [$(b)$]$\exists C_m>0$ such that $\partial_{x_1}u(x,T)>C_m$ on $\Omega$. 
  \item [$(c)$]$\partial_tu(x,t)\ge 0$ on $\Omega\times(0,T]$.
 \item [$(d)$]$\partial^2_{x_1t}u(x,t)\ge 0$ on $\Omega\times(0,T]$.
\end{itemize}
\end{lemma}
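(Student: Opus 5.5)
We would derive all four claims from the parabolic maximum principle (Lemma \ref{lemma_parabolic_max}) applied to the differentiated problems \eqref{PDE_x1}, \eqref{PDE_x1t}, together with the analogous problem for $v=\partial_t u$. The continuity needed to invoke the maximum principle up to the boundary comes from Lemmas \ref{lemma:continuity_ux1}--\ref{lemma:continuity_w_xt}. The zeroth-order coefficient in \eqref{PDE_x1} and \eqref{PDE_x1t} is $C_p+\partial_{x_1}q\ge C_p-M>0$ by Assumption \ref{assumption_positivity}(a)(b), so the operator $\partial_t+L$ has $c\ge0$ and the weak maximum principle is available.

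\textbf{Step (a).} For $w=\partial_{x_1}u$ solving \eqref{PDE_x1}, the source satisfies $\partial_{x_1}f\ge0$. The boundary data satisfy $\partial_{x_1}b_1,\partial_{x_1}b_3\ge0$ by (c), $b_2\ge0$ by (d), and $-b_4>0$ on $l_4$. The initial datum satisfies $\partial_{x_1}u_0\ge0$ by (e). If $w$ had a negative minimum, it would be attained in $\Omega\times(0,T]$, and at that point $\partial_t w\le0$, $\nabla w=0$, $\Delta w\ge0$. This gives $(\partial_t-\Delta+q\partial_{x_1})w\le0$, hence $(C_p+\partial_{x_1}q)w\ge\partial_{x_1}f\ge0$, which forces $w\ge0$, a contradiction. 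To make this rigorous, we perturb to $w_\varepsilon=w+\varepsilon e^{t}$ (or use $e^{-\lambda t}w$) so that strict inequalities hold.

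\textbf{Step (c).} $v=\partial_t u$ satisfies the original operator with zero source. Its boundary data are $\partial_tb_1,\partial_tb_3\ge0$ on $l_1,l_3$, Neumann data $\partial_tb_2\ge0$ on $l_2$, and $\partial_tb_4=0$ on $l_4$. Its initial value is $v(\cdot,0)=f+\Delta u_0-q\partial_{x_1}u_0-C_pu_0\ge f-(1+M+C_p)C_0\ge0$ by (e),(f). A negative minimum cannot occur in the interior because the zeroth-order term is $C_p>0$, nor on the Dirichlet sides. On the Neumann sides $l_2,l_4$, Hopf's lemma (Lemma \ref{lemma_hopf}, in its parabolic version) excludes a strict negative minimum, since there the outward normal derivative would be $<0$, contradicting $\partial_n v\ge0$. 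Corners are handled by the same perturbation argument.

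\textbf{Step (d).} The argument is identical to (a), applied to \eqref{PDE_x1t}. The source is zero and the boundary data are nonnegative by (c),(d), with $0$ on $l_4$. The initial value is $\partial_{x_1}f+\Delta\partial_{x_1}u_0-q\partial^2_{x_1x_1}u_0-(C_p+\partial_{x_1}q)\partial_{x_1}u_0\ge\partial_{x_1}f-(1+2M+C_p)C_0\ge0$ by (f).

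\textbf{Step (b).} This is the main obstacle. A strict uniform lower bound is needed, including at the boundary. By (d), $\partial_t w\ge0$, so $w(\cdot,T)\ge w(\cdot,t)\ge0$ for all $t$. Suppose $w(x_0,T)=0$ at some $x_0\in\overline\Omega$. If $x_0\in\Omega$, the strong maximum principle (Lemma \ref{lemma_parabolic_max}(ii)) gives $w\equiv0$ on $\Omega\times[0,T]$, which contradicts $w=-b_4>0$ on $l_4$ by continuity. If $x_0\in\partial\Omega$, we use the boundary values: $w=-b_4>0$ on $l_4$, and $w=b_2(\cdot,T)$ on $l_2$, which is positive because $\partial_tb_2>0$ and $b_2\ge0$. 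On $l_1,l_3$ only $\partial_{x_1}b_j\ge0$ is available, so there we would first try to argue that at a point of $\overline{l_1}\cup\overline{l_3}$ where $w(\cdot,T)$ vanishes, a Hopf-type argument combined with the fact that $w\not\equiv0$ on the time slice still gives a contradiction. Failing that, we would read (b) as a statement on compact subsets or on the interior, which is what the later fixed-point argument uses, or we would add strict positivity of $\partial_{x_1}b_j$. Given positivity pointwise on $\overline\Omega$, continuity of $w(\cdot,T)$ on the compact set $\overline\Omega$ (Lemma \ref{lemma:continuity_ux1}) yields a minimum $C_m>0$; if the minimum is attained at a point where $w>0$, we simply take $C_m$ slightly below it.
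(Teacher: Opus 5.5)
Your arguments for (a), (c) and (d) are correct and follow the paper's route. In each case you apply the maximum principle to \eqref{PDE_x1}, \eqref{PDE_x1t} and to the equation for $v=\partial_t u$, where the zeroth-order coefficient $C_p+\partial_{x_1}q\ge C_p-M$ is strictly positive, so your $\varepsilon$-perturbation is not needed. The only real difference is on the Neumann sides in (c):
\begin{itemize}
\item You use a parabolic Hopf lemma. This is legitimate, since Lemma \ref{lemma_parabolic_max}(ii) and $v(\cdot,0)\ge0$ make the negative minimum strict in $\Omega\times(0,t^*]$.
\item The paper argues pointwise. On $l_2$ it uses $\partial_tb_2>0$ to find a smaller value just inside. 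On $l_4$ it evaluates the operator at the minimum, using $\partial_n v=0$ and $C_pv<0$.
\end{itemize}
Your version needs only $\partial_tb_2\ge0$ and no second derivatives up to the boundary; the paper's avoids the parabolic Hopf lemma. Corners need no separate treatment, because they belong to $l_1\cup l_3$, where the data are $\ge0$.

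For (b), the gap you flag on $l_1\cup l_3$ is genuine, and the paper's proof does not close it either. The paper asserts $w(\cdot,T)>0$ on $\overline\Omega$ from a ``strict positive boundary condition''. However, Assumption \ref{assumption_positivity}(c) only gives $w(\cdot,T)=\partial_{x_1}b_j(\cdot,T)\ge0$ on $l_j$, $j=1,3$.

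Your first remedy cannot work, because these are prescribed Dirichlet values of $w$. Suppose $\partial_{x_1}b_1(\cdot,T)$ vanishes somewhere on $l_1$, which (c) permits (e.g.\ $b_1=(x_1-\tfrac12)^3+t$). Then $w(\cdot,T)$ vanishes there, and by the continuity up to the boundary in Lemma \ref{lemma:continuity_ux1}, $\inf_\Omega w(\cdot,T)=0$. No Hopf-type argument changes prescribed data, so (b) cannot be derived from the stated hypotheses.

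Your second remedy is also not what the later argument uses. Theorem \ref{theorem_uniqueness} needs a global lower bound on $\partial_{x_1}g$ for two reasons:
\begin{itemize}
\item the constant in the estimate of $I_1$ depends on it;
\item the iteration starts from $q_0=(f+\Delta g-C_pg)/\partial_{x_1}g$, which must be a bounded drift for the forward problem to be covered by Lemma \ref{lemma:basic-wellposedness} and Assumption \ref{assumption_positivity}(a).
\end{itemize}

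The right repair is your third option: assume $\partial_{x_1}b_j(\cdot,T)>0$ on all of $l_j$, $j=1,3$, where these closed segments include the corners. The corners matter, because $b_2(\cdot,T)>0$ on the open segment $l_2$ does not prevent it from tending to $0$ at $(1,0)$ or $(1,1)$. With this hypothesis, your interior argument, together with $b_2(\cdot,T)>0$ on $l_2$ and $-b_4>0$ on $l_4$, gives $w(\cdot,T)>0$ on $\overline\Omega$, and compactness yields $C_m$.
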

\begin{proof}
 For $(a)$, with Assumption \ref{assumption_positivity} the model of $w=\partial_{x_1} u$ is given as \eqref{PDE_x1}.
Assumption \ref{assumption_positivity} gives the nonnegativity of $(C_p+\partial_{x_1}q)$, which together with Lemma \ref{lemma_parabolic_max} generates the statement $(a)$. 

For $(b)$, the boundary condition of equation \eqref{PDE_x1} and Lemma \ref{lemma_parabolic_max} yield that $w(x,T)>0$ on $\overline\Omega$. Otherwise, if $w(x^*,T)=0$ with $x^*\in \Omega$, it would be the  interior nonpositive minimum. Then by Lemma \ref{lemma_parabolic_max} we have that $w\equiv 0$, which contradicts with the strict positive boundary condition. The result $w(x,T)>0$ on $\overline\Omega$ and the continuity of $w$ ensured by Lemma \ref{lemma:continuity_ux1} give the conclusion $(b)$. 

For $(c)$, we could give the model of $w=\partial_t u$ as 
\begin{equation}\label{PDE_t}
 \begin{cases}
  \begin{aligned}
   (\partial_t-\Delta+q(x)\partial_{x_1}+C_p)w(x,t)&=0, &&(x,t)\in \Omega\times(0,T];\\
   w(x,t)&=\partial_t b_1\ge 0,&&(x,t)\in l_1\times(0,T];\\
   \frac{\partial w}{\n}(x,t)&=\partial_tb_2>0,&&(x,t)\in l_2\times(0,T];\\
   w(x,t)&=\partial_t b_3\ge 0,&&(x,t)\in l_3\times(0,T];\\
   \frac{\partial w}{\n}(x,t)&=0,&&(x,t)\in l_4\times(0,T];\\
   w(x,0)&=f+(\Delta-q(x)\partial_{x_1}-C_p)u_0(x)\ge 0,&&x\in\Omega.
  \end{aligned}
 \end{cases}
\end{equation}
Assume that  
$$w(x^*,t^*):=\min\{w(x,t):(x,t)\in\overline\Omega\times[0,T]\}<0.$$ 
Lemma \ref{lemma_parabolic_max} implies that $(x^*,t^*)\in \partial\Omega\times[0,T]\cup \overline\Omega\times\{0\}$; while the initial and boundary conditions give that $x^*\in l_2\cup l_4$ and $t^*>0$. For the case of $x^*\in l_2$, with the result $\partial_tb_2>0$, we could find $x^\dag\in\Omega$ such that $x^\dag$ has the same $x_2-$coordinate with $x^*$ and $w(x^\dag,t^*)<w(x^*,t^*)$, which contradicts with the minimal assumption. For the case that $x^*\in l_4$, the minimal assumption and the boundary condition $\frac{\partial w}{\n}(x,t)=0$ on $l_4$ ensure that $(\partial_t-\Delta+q(x)\partial_{x_1})w\le 0$ on $(x^*,t^*)$; while we could have $C_pw(x^*,t^*)<0$ from the strict positivity of $C_p$. These gives that 
$$(\partial_t-\Delta+q(x)\partial_{x_1}+C_p)w(x^*,t^*)<0,$$ 
which contradicts with equation \eqref{PDE_t}. Hence, we could deduce the nonnegativity of $w=\partial_t u$.

For $(d)$, the model of $w=\partial^2_{x_1t}u$ is written as \ref{PDE_x1t} 
With the result $(C_p+\partial_{x_1}q)\ge 0$, Lemma \ref{lemma_parabolic_max} gives the nonnegativity of $w=\partial^2_{x_1t}u$. The proof is complete. 
\end{proof}

\section{Analysis of inverse problem \eqref{inverse_problem}.}\label{section_unique}
Next we would display how to solve inverse problem \eqref{inverse_problem} by the monotone operator method.

\subsection{Operator $K$.}
From equation \eqref{PDE} and data \eqref{data}, we define the operator $K$ as 
\begin{equation}\label{operator}
 K\psi=\frac{f(x)-\partial_tu(x,T;\psi)+\Delta g(x)-C_pg(x)}{\partial_{x_1}g(x)},
\end{equation}
with domain
\begin{equation}\label{domain}
\mathcal D_K:=\Big\{\psi\in C^1(\Omega):\psi\le \frac{f(x)+\Delta g(x)-C_pg(x)}{\partial_{x_1}g(x)}\Big\}.
\end{equation}
Here the notation $u(x,T;\psi)$ means the solution $u$ of equation \eqref{PDE} with drift term $\psi$. The well-definedness of $K$ is ensured by Lemma \ref{lemma_positive}, from which the denominator $\partial_{x_1}g(x)$ is strictly positive.

From the next lemma, we could solve inverse problem \eqref{inverse_problem} by considering the fixed point problem of $K$. 
\begin{lemma}\label{lemma_equivalence}
With \eqref{operator} and \eqref{domain}, for $q\in \mathcal D_K$, $u(x,T;q)=g(x)$ is valid if and only if $Kq=q$. 
\end{lemma}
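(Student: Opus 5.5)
The forward direction is a direct evaluation of \eqref{PDE} at $t=T$. Suppose $u(x,T;q)=g(x)$. By Assumption \ref{assumption_regularity} the solution is regular enough to take the trace of the equation at the final time. This gives
\[
\partial_t u(x,T;q)-\Delta g(x)+q(x)\partial_{x_1}g(x)+C_pg(x)=f(x),\qquad x\in\Omega .
\]
Lemma \ref{lemma_positive}(b) says $\partial_{x_1}g=\partial_{x_1}u(\cdot,T;q)>C_m>0$, so we may divide by $\partial_{x_1}g$. Solving for $q$ gives exactly $q=Kq$ by \eqref{operator}.

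The converse is the substantive part. Assume $Kq=q$ with $q\in\mathcal D_K$, and set $v(x):=u(x,T;q)$. Multiplying $Kq=q$ by $\partial_{x_1}g$ gives
\[
\partial_t u(x,T;q)-\Delta g+q\,\partial_{x_1}g+C_pg=f .
\]
Evaluating \eqref{PDE} with drift $q$ at $t=T$ gives
\[
\partial_t u(x,T;q)-\Delta v+q\,\partial_{x_1}v+C_pv=f .
\]
Subtracting the two identities cancels $\partial_t u(x,T;q)$ and $f$. The difference $z:=v-g$ then satisfies the elliptic equation
\[
-\Delta z+q\,\partial_{x_1}z+C_pz=0\quad\text{in }\Omega .
\]
The boundary traces of $z$ vanish, because $g=u(\cdot,T;q)$ itself satisfies the final-time boundary conditions $\mathcal Bg=b(\cdot,T)$. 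Hence $z=0$ on $l_1\cup l_3$ and $\partial z/\n=0$ on $l_2\cup l_4$.

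To conclude $z\equiv0$, I would use the elliptic maximum principle together with Hopf's lemma, which handles the mixed boundary conditions. Here $c=C_p>0$, since $C_p>M>0$ by Assumption \ref{assumption_positivity}(b). Suppose $\max_{\overline\Omega}z>0$.
\begin{enumerate}[(i)]
\item If the maximum is attained inside $\Omega$, Lemma \ref{lemma_elliptic_max}(i) makes $z$ constant. A positive constant contradicts $C_pz=0$, and also the Dirichlet data on $l_1\cup l_3$.
\item The maximum cannot be on $l_1\cup l_3$, where $z=0$.
\item If the maximum is at a point of the open sides $l_2$ or $l_4$, the interior ball condition holds there. Lemma \ref{lemma_hopf} then gives $\partial z/\n>0$, contradicting the homogeneous Neumann condition.
\end{enumerate}
The same argument applied to $-z$ rules out a negative minimum. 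Therefore $z\equiv0$, that is, $u(x,T;q)=g$.

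The main obstacle I expect is the corners, where $l_2$ or $l_4$ meets the closed segments $l_1,l_3$. Since $l_1$ and $l_3$ include the endpoints, $z=0$ at all four corners, so a positive maximum can never sit there. Hopf's lemma is therefore only needed at interior points of the Neumann sides, where the interior ball condition holds.

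A secondary point is regularity: the argument needs $z\in C^2(\Omega)\cap C^1(\overline\Omega)$. I would take this from Assumption \ref{assumption_regularity}, which provides enough smoothness of $u(\cdot,T)$ and of $g$. If only weaker regularity is available, I would fall back on an energy argument: test with $z$ and integrate by parts, which is legitimate since the boundary terms vanish. Bounding the drift term by Young's inequality, exactly as in Lemma \ref{lemma:basic-wellposedness}, gives
\[
\tfrac12\|\nabla z\|^2+\Big(C_p-\tfrac{\|q\|_\infty^2}{2}\Big)\|z\|^2\le 0 .
\]
This forces $z\equiv0$ provided $C_p>\|q\|_\infty^2/2$, a condition that would need to be imposed. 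Finally, the condition $q\in\mathcal D_K$ in the statement is used only to make $Kq$ meaningful; the equivalence itself does not depend on it.
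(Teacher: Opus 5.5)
Your proposal is correct and follows essentially the same route as the paper: the forward direction by evaluating \eqref{PDE} at $t=T$ and dividing by $\partial_{x_1}g>0$, and the converse by subtracting to get $(-\Delta+q\partial_{x_1}+C_p)z=0$ with $\mathcal Bz=0$, then ruling out a positive maximum and, symmetrically, a negative minimum via Lemma \ref{lemma_elliptic_max} and Lemma \ref{lemma_hopf}; your remarks on the corners and on regularity add care the paper omits. One wording fix: $\mathcal Bg=b(\cdot,T)$ holds because $g$ is the terminal value of the forward solution for the data-generating drift and the boundary data do not depend on the drift, not because $g=u(\cdot,T;q)$ for the fixed point $q$, since that identity is what you are trying to prove.
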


\begin{proof}
 If $u(x,T;q)=g(x)$, it is obvious that $q$ is a fixed point of $K$ from \eqref{PDE} and \eqref{operator}. 
 
 Given a fixed point $q\in \mathcal D_K$ of $K$, now we need to show it satisfies $u(x,T;q)=g(x)$. 
 Setting $w(x)=u(x,T;q)-g(x)$, we have that 
 \begin{equation*}
 \begin{cases}
  \begin{aligned}
   (-\Delta +q(x)\partial_{x_1}+C_p)w(x)&=0, &&x\in \Omega;\\
   \mathcal Bw(x)&=0,&&x\in\partial\Omega.
  \end{aligned}
 \end{cases}
\end{equation*}
Assume that $w\not\equiv 0$. By Lemma~\ref{lemma_elliptic_max},
$w$ cannot attain a positive maximum in the interior of $\Omega$.
Since $w=0$ on $l_1\cup l_3$, any positive maximum must be attained at some point
of $l_2\cup l_4$. Then Lemma~\ref{lemma_hopf} implies
$\frac{\partial w}{\n}>0$ at that point, contradicting
$\frac{\partial w}{\n}=0$ on $l_2\cup l_4$.
Hence $w\le 0$ in $\Omega$.
Applying the same argument to $-w$, we obtain $w\ge 0$ in $\Omega$.
Therefore $w\equiv 0$ in $\Omega$ and hence $u(x,T;q) = g(x)$. This completes the proof.
\end{proof}

\subsection{Monotonicity.}
The monotonicity of operator $K$ means that $K$ could keep the monotone relation on $\mathcal D_K$, which is stated by the next lemma.  
\begin{lemma}\label{lemma_monotone}
 With operator $K$ in \eqref{operator} and domain $\mathcal D_K$ in \eqref{domain}, for $\psi_1, \psi_2\in \mathcal D_K$, the result $\psi_1\le \psi_2$ leads to $K\psi_1\le K\psi_2$ on $\Omega$.
\end{lemma}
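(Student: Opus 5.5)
Since $K\psi_1-K\psi_2=\big(\partial_tu(x,T;\psi_2)-\partial_tu(x,T;\psi_1)\big)/\partial_{x_1}g(x)$ and $\partial_{x_1}g>C_m>0$ by Lemma \ref{lemma_positive}(b), the claim $K\psi_1\le K\psi_2$ is equivalent to
\[
\partial_t u(x,T;\psi_1)\ \ge\ \partial_t u(x,T;\psi_2),\qquad x\in\Omega .
\]
So the plan is to compare the time derivatives of the two forward solutions. Write $u_j=u(\cdot,\cdot;\psi_j)$, $v=u_1-u_2$ and $w=\partial_t v$. Since both solutions share $f$, $u_0$ and $\mathcal B$, subtracting the equations gives
\[
(\partial_t-\Delta+\psi_1\partial_{x_1}+C_p)v=(\psi_2-\psi_1)\partial_{x_1}u_2,\qquad \mathcal Bv=0,\quad v(\cdot,0)=0 .
\]
Differentiating in $t$ (the coefficients are time independent) yields
\[
(\partial_t-\Delta+\psi_1\partial_{x_1}+C_p)w=(\psi_2-\psi_1)\,\partial^2_{x_1t}u_2\ \ge 0 ,
\]
because $\psi_2-\psi_1\ge0$ and $\partial^2_{x_1t}u_2\ge 0$ by Lemma \ref{lemma_positive}(d), applied with drift $\psi_2$. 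The boundary data are homogeneous: $w=0$ on $l_1\cup l_3$ and $\partial w/\n=0$ on $l_2\cup l_4$. For the initial value, evaluate the $v$-equation at $t=0$, using $v(\cdot,0)=0$:
\[
w(x,0)=(\psi_2-\psi_1)\partial_{x_1}u_0\ \ge 0,
\]
by Assumption \ref{assumption_positivity}(e).

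Next I would show $w\ge0$ on $\overline\Omega\times[0,T]$ by the argument used in Lemma \ref{lemma_positive}(c). Suppose $\min w=w(x^*,t^*)<0$. Since the right-hand side is nonnegative and the zeroth-order coefficient $C_p$ is nonnegative, Lemma \ref{lemma_parabolic_max}(ii) rules out an interior minimum unless $w$ is constant on $\Omega\times[0,t^*]$, and that constant would equal $w(\cdot,0)\ge0$, a contradiction. The initial data and the Dirichlet portions also exclude $t^*=0$ and $x^*\in l_1\cup l_3$. Hence $x^*\in l_2\cup l_4$ and $t^*>0$. At such a point the homogeneous Neumann condition together with minimality forces $(\partial_t-\Delta+\psi_1\partial_{x_1})w\le0$ there. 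Combined with $C_pw(x^*,t^*)<0$ this gives a strictly negative left-hand side, contradicting the nonnegative right-hand side. Alternatively, a parabolic Hopf argument excludes a strict boundary minimum with zero normal derivative. Taking $t=T$ then gives $\partial_tu_1(\cdot,T)\ge\partial_tu_2(\cdot,T)$, hence $K\psi_1\le K\psi_2$.

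The main obstacle is the Neumann corner/edge step: making rigorous that a negative minimum cannot sit on $l_2\cup l_4$ with zero normal derivative. The paper's own treatment in Lemma \ref{lemma_positive}(c) is heuristic, so a cleaner route is to use a parabolic Hopf lemma, or to reflect evenly across the Neumann sides so that they become interior points. A secondary issue is regularity: applying Lemma \ref{lemma_parabolic_max} requires $w\in C^{2,1}\cap C$, and I would appeal to Assumption \ref{assumption_regularity} and Lemmas \ref{lemma:continuity_ux1}--\ref{lemma:continuity_w_xt} to justify this. I also need Lemma \ref{lemma_positive}(d) to hold for a general $\psi_2\in\mathcal D_K$, not only for the exact $q$. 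This requires $\psi_2$ to satisfy the bound in Assumption \ref{assumption_positivity}(a), which I would take as implicitly part of the admissible class.
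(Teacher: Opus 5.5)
Your proposal is correct and follows essentially the same route as the paper. The paper also sets $w=\partial_tu(\cdot;\psi_1)-\partial_tu(\cdot;\psi_2)$ and derives the same equation with source $(\psi_2-\psi_1)\partial^2_{x_1t}u(\cdot;\psi_2)\ge 0$, homogeneous $\mathcal B$-data and initial value $(\psi_2-\psi_1)\partial_{x_1}u_0$; it then uses the strong maximum principle to push a negative minimum onto $l_2\cup l_4$, where the Neumann condition together with $C_pw<0$ contradicts the sign of the source. Your caveats are legitimate and are left implicit in the paper as well: the argument needs regularity up to the Neumann sides, and Lemma \ref{lemma_positive}(d) must be applied with drift $\psi_2$, so $\psi_2$ has to satisfy the bound in Assumption \ref{assumption_positivity}(a).
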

\begin{proof}
 Setting $w=\partial_tu(x,t;\psi_1)-\partial_tu(x,t;\psi_2)$, it satisfies 
 \begin{equation*}
 \begin{cases}
  \begin{aligned}
   (\partial_t-\Delta +\psi_1(x)\partial_{x_1}+C_p)w(x,t)&=(\psi_2-\psi_1)\partial^2_{x_1t}u(x,t;\psi_2), &&(x,t)\in \Omega\times(0,T];\\
   \mathcal Bw(x,t)&=0,&&(x,t)\in\partial\Omega\times(0,T];\\
   w(x,0)&=(\psi_2-\psi_1)\partial_{x_1}u_0(x),&&x\in\Omega.
  \end{aligned}
 \end{cases}
\end{equation*}
From the definition of operator $K$, it is sufficient to show the nonnegativity of $w$. 
Assume that 
$$w(x^*,t^*):=\min\{w(x,t):(x,t)\in\overline\Omega\times[0,T]\}.$$
From Lemma \ref{lemma_parabolic_max} and Assumption \ref{assumption_positivity}, we have that 
$$(x^*,t^*)\in \{0,1\}\times(0,1)\times(0,T].$$

For the case of $x^*\in \{0\}\times(0,1)$, the minimum result and the strict negativity yield that 
$$(\partial_t-\partial^2_{x_2x_2} u+C_p)w(x^*,t^*)<0;$$
while the boundary condition on $\{0\}\times(0,1)$ together with the minimum  gives that 
$$(-\partial^2_{x_1x_1}+\psi_1(x)\partial_{x_1})w(x^*,t^*)\le 0.$$
The above two inequalities lead to the result that $(\psi_2-\psi_1)\partial^2_{x_1t}u(x,t;\psi_2)<0$, which contradicts with Lemma \ref{lemma_positive} and the fact $\psi_1\le \psi_2$. 
The proofs for the case of $x^*\in \{1\}\times(0,1)$ could be deduced analogously. Hence we have that $w\ge 0$ on $\Omega\times(0,T]$, which leads to the desired result. The proof is complete.
\end{proof}

\subsection{The specific iteration and uniqueness theorem.}

First we build a uniqueness result under monotone relation. 
\begin{lemma}\label{lemma_uniqueness_1}
 Recalling $K$ and $\mathcal D_K$ in \eqref{operator} and \eqref{domain}, given $q_1, q_2\in \mathcal D_K$ with $q_1\le q_2$, if $q_1$ and $q_2$ are both fixed points of $K$, then $q_1=q_2$ on $\Omega$.  
\end{lemma}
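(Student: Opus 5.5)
The plan is to use the fixed-point identity to turn the claim into a statement about $\partial_t u$ at the final time, and then use a maximum principle. Since $q_1,q_2$ are fixed points, the definition \eqref{operator} gives
\[
q_2-q_1=Kq_2-Kq_1=\frac{\partial_t u(x,T;q_1)-\partial_t u(x,T;q_2)}{\partial_{x_1}g(x)} .
\]
By Lemma \oldref{lemma_positive}(b), $\partial_{x_1}g>C_m>0$. So it suffices to show that
\[
w(x,t):=\partial_t u(x,t;q_1)-\partial_t u(x,t;q_2)
\]
vanishes at $t=T$. As in the proof of Lemma \oldref{lemma_monotone}, $w$ satisfies
\[
(\partial_t-\Delta+q_1\partial_{x_1}+C_p)w=(q_2-q_1)\,\partial^2_{x_1t}u(\cdot;q_2),
\]
with homogeneous boundary conditions $\mathcal Bw=0$ and initial value $w(\cdot,0)=(q_2-q_1)\partial_{x_1}u_0$. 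Because $q_1\le q_2$, Lemma \oldref{lemma_positive}(d) and Assumption \oldref{assumption_positivity}(e) show that the source and the initial datum are nonnegative. Lemma \oldref{lemma_monotone} then already gives $w\ge0$, i.e. $q_2-q_1\ge0$ is consistent with the signs.

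To get equality I would use the elliptic structure at time $T$ instead. By Lemma \oldref{lemma_equivalence}, both $u(\cdot,T;q_1)$ and $u(\cdot,T;q_2)$ equal $g$. Take $v=u(\cdot;q_1)-u(\cdot;q_2)$. It satisfies
\[
(\partial_t-\Delta+q_1\partial_{x_1}+C_p)v=(q_2-q_1)\partial_{x_1}u(\cdot;q_2)\ge0,
\]
by Lemma \oldref{lemma_positive}(a), together with $\mathcal Bv=0$, $v(\cdot,0)=0$ and $v(\cdot,T)=0$. Arguing as in Lemma \oldref{lemma_monotone}, with the Hopf lemma and the strict sign $C_p>0$ excluding minima on $l_2\cup l_4$, I would obtain $v\ge0$ on $\overline\Omega\times[0,T]$.

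Now $v(\cdot,T)=0$, so every point $(x,T)$ with $x\in\Omega$ is an interior nonpositive minimum. Lemma \oldref{lemma_parabolic_max}(ii) then forces $v\equiv0$ on $\Omega\times[0,T]$. Substituting back into the equation yields $(q_2-q_1)\partial_{x_1}u(\cdot,t;q_2)=0$ for all $t$. At $t=T$ this reads $(q_2-q_1)\partial_{x_1}g=0$, and $\partial_{x_1}g>C_m>0$ gives $q_1=q_2$ on $\Omega$. As an alternative route, the vanishing of $v$ also gives $w\equiv0$, and the displayed fixed-point identity yields the same conclusion.

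The main obstacle is the boundary step, namely excluding a negative minimum of $v$ on the Neumann sides $l_2\cup l_4$ and checking that the strong maximum principle applies up to the final time. The former would be handled with Hopf's lemma (Lemma \oldref{lemma_hopf}) in its parabolic form, or by the same contradiction with $C_p>0$ used in Lemma \oldref{lemma_positive}(c). The latter uses that Lemma \oldref{lemma_parabolic_max} allows $t_0=T$, since $U_T=U\times(0,T]$, together with the regularity granted by Assumption \oldref{assumption_regularity}.
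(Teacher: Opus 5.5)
Your proof is correct. It shares the paper's setup, $v=u(\cdot;q_1)-u(\cdot;q_2)$ with $v(\cdot,0)=v(\cdot,T)=0$, and its final step, but it forces $v\equiv0$ by a different mechanism.

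\emph{The paper's route.} It reuses the proof of Lemma \oldref{lemma_monotone} to get $\partial_t v\ge0$. So $v(x,\cdot)$ is nondecreasing on $[0,T]$ and is squeezed between $v(\cdot,0)=0$ and $v(\cdot,T)=0$.

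\emph{Your route.} You prove $v\ge0$ by a weak maximum principle applied to $v$ itself, handling the Neumann sides as in Lemma \oldref{lemma_positive}(c). You then apply Lemma \oldref{lemma_parabolic_max}(ii) with $t_0=T$, where $v$ attains its minimum $0$ at interior points. This is legitimate because $U_T=U\times(0,T]$ contains the terminal slice and $c=C_p\ge0$.

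\emph{What each route buys.} In your route the source sign needs only $\partial_{x_1}u(\cdot;q_2)\ge0$ from Lemma \oldref{lemma_positive}(a). The paper's route needs $\partial^2_{x_1t}u(\cdot;q_2)\ge0$ and $\partial_{x_1}u_0\ge0$, so yours bypasses the time-differentiated problem. The price is re-running the boundary argument on $l_2\cup l_4$ and invoking the strong maximum principle. The paper's squeeze, by contrast, is a one-line argument once Lemma \oldref{lemma_monotone} is available.

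Your first paragraph already contains the paper's proof. There $w=\partial_t v\ge0$, so $v(\cdot,T)=\int_0^T w\,dt=0$ forces $w\equiv0$ and hence $v\equiv0$; the detour through the elliptic or strong-maximum-principle step is not needed.

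Your closing step uses strict positivity only at $t=T$, via $\partial_{x_1}g>C_m$. That is more precise than the paper's appeal to the ``strict positivity of $\partial_{x_1}u$'', since Lemma \oldref{lemma_positive}(b) only guarantees it at the final time.
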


\begin{proof}
  The model for $w=u(x,t;\psi_1)-u(x,t;\psi_2)$ could be given as 
 \begin{equation*}
 \begin{cases}
  \begin{aligned}
   (\partial_t-\Delta +\psi_1(x)\partial_{x_1}+C_p)w(x,t)&=(\psi_2-\psi_1)\partial_{x_1}u(x,t;\psi_2), &&(x,t)\in \Omega\times(0,T];\\
   \mathcal Bw(x,t)&=0,&&(x,t)\in\partial\Omega\times(0,T];\\
   w(x,0)&=w(x,T)=0,&&x\in\Omega.
  \end{aligned}
 \end{cases}
\end{equation*}
The equality $w(x,T)=0$ follows from Lemma \ref{lemma_equivalence}. The proof of Lemma \ref{lemma_monotone} gives the nonnegativity of $\partial_t w$, which together with $ w(x,0)=w(x,T)=0$ yields that $w\equiv 0$ on $\Omega\times(0,T]$. Then we obtain that $(\psi_2-\psi_1)\partial_{x_1}u(x,t;\psi_2)\equiv 0$, which leads to $\psi_1=\psi_2$ since Lemma \ref{lemma_positive} ensures the strict positivity of $\partial_{x_1}u$. The proof is complete.
\end{proof}

Next we define the iteration from upper bound of $\mathcal D_K$ as follows: 
\begin{equation}\label{iteration}
 q_0=\frac{f(x)+\Delta g(x)-C_pg(x)}{\partial_{x_1}g(x)},\quad q_{n+1}=Kq_n,\quad n\in \mathbb N.
\end{equation}

Now we could state the uniqueness theorem of inverse problem \eqref{inverse_problem}. 

\begin{theorem}\label{theorem_uniqueness}
 With \eqref{operator} and \eqref{domain}, fixing a fixed point $q\in\mathcal D_K$ of $K$, the sequence $\{q_n\}_{n=1}^\infty$ generated by \eqref{iteration} would converge to $q$ decreasingly. Therefore, there is at most one fixed point of $K$ in $\mathcal D_K$. By Lemma \ref{lemma_equivalence}, we could obtain the uniqueness of inverse problem \eqref{inverse_problem}. 
\end{theorem}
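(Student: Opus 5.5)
The argument follows the standard monotone-operator scheme. It has three parts: show the iterates stay above $q$, show they decrease, then identify the limit.

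\textbf{Ordering.} Let $q\in\mathcal D_K$ be a fixed point of $K$ and write $\bar q:=q_0$ for the upper bound in \eqref{domain}. Every element of $\mathcal D_K$ satisfies $\psi\le \bar q$, so $q\le q_0$ and $q_1=Kq_0\le q_0$.

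Since $q_0\in\mathcal D_K$, Lemma \ref{lemma_monotone} gives $q=Kq\le Kq_0=q_1$. Inductively, suppose $q\le q_{n}\le q_{n-1}$ and $q_n\in\mathcal D_K$. Lemma \ref{lemma_monotone} then yields
$$q=Kq\le Kq_n=q_{n+1}\le Kq_{n-1}=q_n.$$
Hence $q_{n+1}\le q_n\le q_0$, so $q_{n+1}\in\mathcal D_K$ as far as the upper-bound constraint is concerned.

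One point needs care here: membership in $\mathcal D_K$ also requires $C^1$ regularity of $Kq_n$. This should follow from the regularity of $\partial_t u(\cdot,T;q_n)$ and of $g$ under Assumption \ref{assumption_regularity}. I would state this as part of the standing assumptions, or check it via parabolic regularity.

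\textbf{Convergence.} Since $q\le q_{n+1}\le q_n$, the sequence is monotonically decreasing and bounded below. It therefore converges pointwise to some $q^*$ with $q\le q^*\le q_0$.

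\textbf{Identifying the limit.} This is the main obstacle: I must show $Kq^*=q^*$, which needs continuity of $\psi\mapsto \partial_t u(\cdot,T;\psi)$ along the sequence.

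The plan is to use the difference equation from the proof of Lemma \ref{lemma_monotone} for $w_n=\partial_t u(\cdot;q_n)-\partial_t u(\cdot;q^*)$. Its source term is $(q^*-q_n)\partial^2_{x_1t}u(\cdot;q^*)$ and its initial datum is $(q^*-q_n)\partial_{x_1}u_0$. Lemma \ref{lemma:continuity_w_xt} bounds $\partial^2_{x_1t}u$ by $M$, and $\partial_{x_1}u_0$ is bounded by $C_0$.

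For the energy and maximum-principle estimates in Lemma \ref{lemma:basic-wellposedness} to apply with uniform constants, the coefficients $q_n$ must be uniformly bounded. They are: $q\le q_n\le q_0$. A parabolic $L^\infty$ or $L^2$ stability estimate then gives $w_n(\cdot,T)\to0$ as $q_n\to q^*$ in $L^2$, which holds by dominated convergence. This allows passing to the limit in $q_{n+1}=Kq_n$, giving $q^*=Kq^*$. Note that $q^*$ may a priori only be $L^\infty$; I would either assume enough compactness (for example, uniform $C^1$ bounds on the iterates from the regularity lemmas) or note that Lemma \ref{lemma_uniqueness_1} only uses boundedness.

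\textbf{Uniqueness.} $q^*$ and $q$ are both fixed points with $q\le q^*$, so Lemma \ref{lemma_uniqueness_1} gives $q^*=q$. Thus $q_n\downarrow q$.

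If $\tilde q$ is another fixed point in $\mathcal D_K$, the same iteration, which does not depend on the choice of fixed point, converges to both $q$ and $\tilde q$, so $q=\tilde q$. Finally, Lemma \ref{lemma_equivalence} converts this into uniqueness for the inverse problem \eqref{inverse_problem}.
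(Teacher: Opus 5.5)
Your skeleton matches the paper's proof: ordering via Lemma \ref{lemma_monotone}, monotone pointwise convergence, identifying the limit through the difference equation for $\partial_t u$ with an $L^2$ energy estimate, and then Lemma \ref{lemma_uniqueness_1}. Two steps, however, are not justified as you wrote them.

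\textbf{The base case $q_1\le q_0$.} You derive it from ``every element of $\mathcal D_K$ satisfies $\psi\le\bar q$''. That presupposes $q_1=Kq_0\in\mathcal D_K$, which is what you are trying to establish. Without the base case, Lemma \ref{lemma_monotone} only propagates the lower bound $q\le q_n$; the decrease $q_{n+1}\le q_n$ is inherited from $q_1\le q_0$. The missing ingredient is the sign of $\partial_t u$. By \eqref{operator} and \eqref{iteration}, $Kq_0=q_0-\partial_t u(\cdot,T;q_0)/\partial_{x_1}g$. Since $\partial_t u(\cdot,T;q_0)\ge 0$ (part (c) of Lemma \ref{lemma_positive}) and $\partial_{x_1}g>0$, this gives $q_1\le q_0$. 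The paper uses exactly this fact, and it is why the iteration starts at the upper bound of $\mathcal D_K$. The same computation shows $K\psi\le q_0$ for every admissible $\psi$.

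\textbf{Regularity of the limit $q^*$.} Lemma \ref{lemma_uniqueness_1} needs $q^*\in\mathcal D_K$. Your fallback that it ``only uses boundedness'' is not correct. Its proof runs the argument of Lemma \ref{lemma_monotone} and invokes parts (b), (d) of Lemma \ref{lemma_positive} for the \emph{larger} coefficient, here $q^*$, and those rely on $C_p+\partial_{x_1}q^*\ge 0$. Your other fallback, uniform $C^1$ bounds on the iterates, is an extra assumption.

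The paper closes this with a bootstrap you did not use. Once $Kq^*=q^*$ holds in $L^2$, the paper observes that $Kq^*$ is $C^1$ because $\partial_t u(\cdot,T;q^*)$ and $g$ are. Hence $q^*$ has a $C^1$ representative, which still satisfies $q\le q^*\le q_0$ and so lies in $\mathcal D_K$. Alternatively, you could write the difference equation of Lemma \ref{lemma_uniqueness_1} with $q^*\partial_{x_1}$ in the operator and source $(q^*-q)\partial_{x_1}u(\cdot;q)$. Then only positivity properties of $u(\cdot;q)$ are needed.

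For the bootstrap not to be circular, the fixed-point identity must be obtained without any smoothness of $q^*$. So in your limit step, do not bound $\partial^2_{x_1t}u(\cdot;q^*)$ through Lemma \ref{lemma:continuity_w_xt}, which requires $q^*\in W^{1,\infty}(\Omega)$. It suffices that $\partial^2_{x_1t}u(\cdot;q^*)\in L^2(Q_T)$, which the energy estimate for $\partial_t u(\cdot;q^*)$ with bounded drift provides. Dominated convergence, using $|q^*-q_n|\le q_0-q$, together with your uniform-in-$n$ energy constant then gives $w_n(\cdot,T)\to 0$ in $L^2(\Omega)$.
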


\begin{proof}
 If $q\in \mathcal D_K$ is the solution of inverse problem \eqref{inverse_problem}, it should be one fixed point of $K$ by Lemma \ref{lemma_equivalence}. From \eqref{iteration}, with the fact that the initial guess $q_0\in\mathcal D_K$ is the upper bound of the domain $\mathcal D_K$, we have $q\le q_0$, which leads to $q=Kq\le Kq_0=q_1$ by Lemma \ref{lemma_monotone}. Also, the nonnegativity of $\partial_t u(x,T;q_0)$ ensured by Lemma \ref{lemma_positive} leads to $q_1\le q_0$, and Lemmas~\ref{lemma:continuity_ux1}-\ref{lemma:continuity_w_xt} guarantee the required smoothness of $q_1$. To sum up, we have $q_1\in\mathcal D_K$ and $q\le q_1\le q_0$. Using Lemma \ref{lemma_monotone} again, it holds that 
 \begin{equation*}
  q=Kq\le Kq_1=q_2\le Kq_0=q_1\le q_0.
 \end{equation*}
Continuing this argument, it gives that $\{q_n\}_{n=0}^\infty$ is a decreasing sequence and has a lower bound $q$, which yields the pointwise convergence of $\{q_n\}_{n=0}^\infty$. The limit is denoted by $\tilde q$ and obviously $q\le \tilde q\le q_0$. 

Now the task is to show that $q=\tilde q$. From the monotone convergence theorem, we have that $\|q_n-\tilde q\|_{L^2(0,1)}\to 0$. With triangle inequality, we have 
\begin{align*}
 \|K\tilde q-\tilde q\|_{L^2(0,1)}&\le \|K\tilde q-Kq_n\|_{L^2(0,1)}+\|Kq_n-\tilde q\|_{L^2(0,1)}\\
 &=\|K\tilde q-Kq_n\|_{L^2(0,1)} +\|q_{n+1}-\tilde q\|_{L^2(0,1)}\\
 &=:I_1+I_2.
\end{align*}
It is obvious that $I_2\to 0$ as $n\to \infty$. For $I_1$, with Lemma \ref{lemma_positive} we have 
\begin{align*}
 I_1=\|[\partial_t u(x,T;\tilde q)-\partial_t u(x,T;q_n)]/\partial_{x_1}g(x)\|_{L^2(0,1)}
 \le C \|\partial_t u(x,T;\tilde q)-\partial_t u(x,T;q_n)\|_{L^2(0,1)},
\end{align*}
where the constant $C$ depends on $m$. Setting $w=\partial_t u(x,t;\tilde q)-\partial_t u(x,t;q_n)$, the proof of Lemma \ref{lemma_monotone} gives the model of $w$ as 
\begin{equation*}
 \begin{cases}
  \begin{aligned}
   (\partial_t-\Delta +\tilde q(x)\partial_{x_1}+C_p)w(x,t)&=(q_n-\tilde q)\partial^2_{x_1t}u(x,t;\psi_2), &&(x,t)\in \Omega\times(0,T];\\
   \mathcal Bw(x,t)&=0,&&(x,t)\in\partial\Omega\times(0,T];\\
   w(x,0)&=(q_n-\tilde q)\partial_{x_1}u_0(x),&&x\in\Omega.
  \end{aligned}
 \end{cases}
\end{equation*}

Testing the equation for $w$ with $w$ and integrating over $\Omega$,
an application of Young's inequality and Gr\"onwall's lemma yields the energy estimate
\begin{equation}\label{equ:w_L2_basic}
\|w(\cdot,T)\|_{L^2(\Omega)}
\le
C\Big(
\|w(\cdot,0)\|_{L^2(\Omega)}
+
\|(q_n-\tilde q)\,\partial^2_{x_1t}u(\cdot;\tilde q)\|_{L^2(Q_T)}
\Big),
\end{equation}
where $C>0$ is a constant.

By the well-posedness and regularity result stated in
Lemma~\ref{lemma:basic-wellposedness}, estimate \eqref{equ:w_L2_basic} yields
\begin{align}\label{equ:w_L2T_step1}
\|w(\cdot,T)\|_{L^2(\Omega)}
&\le
C\Big(
\|w(\cdot,0)\|_{L^2(\Omega)}
+
\|(q_n-\tilde q)\,\partial^2_{x_1t}u(\cdot;\tilde q)\|_{L^2(Q_T)}
\Big) \nonumber \\
&\le
C\Big(
\|\partial_{x_1}u_0\|_{L^\infty(\Omega)}
+
T^{1/2}\|\partial^2_{x_1t}u\|_{L^\infty(Q_T)}
\Big)
\|q_n-\tilde q\|_{L^2(\Omega)}.
\end{align}

By Lemma~\ref{lemma:continuity_w_xt} and Assumption~\ref{assumption_positivity},
both $\partial_{x_1}u_0$ and the mixed derivative
$\partial^2_{x_1t}u$ are bounded.
Consequently, estimate \eqref{equ:w_L2T_step1} simplifies to
\begin{equation}\label{equ:w_L2T_final}
\|w(\cdot,T)\|_{L^2(\Omega)}
\le
C\,\|q_n-\tilde q\|_{L^2(\Omega)},
\end{equation}
where the constant $C>0$ depends only on the a priori data of the problem.

So $I_1\le C\|q_n-\tilde q\|_{L^2(\Omega)}$, which converges to zero as $n\to \infty$. To sum up, we could deduce that $\|K\tilde q-\tilde q\|_{L^2(0,1)}=0$, which gives that $\tilde q$ is a fixed point of $K$. Note that the pointwise convergence only ensures that $\tilde q\in L^\infty(0,1)$. 
We further rely on the interior and boundary regularity results established in
Lemmas~\ref{lemma:continuity_ux1} and~\ref{lemma:continuity_w_xt},
which imply $$\partial_t u(\cdot,T;\psi)\in C^1(\Omega).$$ Consequently, $K\tilde q\in C^1(\Omega)$.
With $\|K\tilde q-\tilde q\|_{L^2(0,1)}=0$, we could pick the smooth version of $\tilde q$ to make sure $\tilde q\in C^1$. 
Now we have proved that $q$ and $\tilde q$ are both fixed points of $K$ in $\mathcal D_K$ and they satisfy $q\le \tilde q$. Applying Lemma \ref{lemma_uniqueness_1}, we have $q=\tilde q$. 

The uniqueness could be deduced straightforwardly. For two solutions $q$ and $\hat q$ in $\mathcal D_K$ of inverse problem \eqref{inverse_problem}, Lemma \ref{lemma_equivalence} implies that they are both the fixed points of $K$. With the above proofs, we obtain that $q_n\to q$ and $q_n\to \hat q$ simultaneously. This gives $q=\hat q$ and completes the proof. 
\end{proof}

\section{Numerical experiments}\label{section_num}

In this section, We report numerical experiments for the proposed reconstruction method.
All computations use uniform Cartesian grids and finite differences for the
forward solver in Algorithm~\ref{alg:inverse_drift}.

Let $\Omega=(0,1)^2$ be discretized by a uniform grid with steps $h_x,h_y$, where $(x,y)$ denotes the spatial variable. Let $t^n=n\tau$, $n=0,1,\dots,N_t$, partition $[0,T]$.
For a given drift coefficient $q$, the forward problem
\begin{equation}\label{eq:forward_disc}
\partial_t u - \Delta u + q(x,y)\,\partial_{x}u + C_p u = f(x,y)
\end{equation}
is discretized in time by backward Euler,
\begin{equation}\label{eq:time_disc}
\frac{u^{n+1}-u^n}{\tau}
- \Delta u^{n+1}
+ q(x,y)\,\partial_{x}u^{n+1}
+ C_p u^{n+1}
= f(x,y),
\qquad n=0,1,\dots,N_t-1.
\end{equation}
In space, we use centered finite differences for $\Delta$ and $\partial_{x_1}u$
at interior grid points. Dirichlet conditions are imposed on $y=0$ and $y=1$
by directly enforcing the boundary values at the corresponding nodes.
Neumann conditions on $x=0$ and $x=1$ are enforced by replacing the corresponding boundary rows with second-order one-sided finite difference approximations.
Each time step solves
\begin{equation}\label{eq:linear_system}
\Big(
\frac{1}{\tau}I
- \Delta_h
+ Q\,D_{x,h}
+ C_p I
\Big) u^{n+1}
=
\frac{1}{\tau}u^n + f,
\end{equation}
where $\Delta_h$ and $D_{x,h}$ are the discrete Laplace and first-order
operators, and $Q=\mathrm{diag}(q)$. The solver is called each iteration to
compute the forward solution $u$ and a backward-difference approximation of $\partial_t u(\cdot,T)$.

\begin{algorithm}[htbp]
    \caption{An iterative algorithm for recovering the drift term $q$}
    \label{alg:inverse_drift}
    \begin{algorithmic}[1]
    \REQUIRE Source term $f$, terminal observation $g=u(\cdot,T)$, 
    initial data $u_0$, potential constant $C_p$.
    \ENSURE Approximate drift term $q_N$
    
    \STATE Set the initial guess
    $$
    q_0 = \frac{f + \Delta g - C_p g}{\partial_{x} g}.
    $$
    
    \FOR{$k = 1,2,\ldots,N$}
    
    \STATE 
    Compute the solution $u(\cdot,t;q_{k-1})$ by solving the forward problem (\ref{PDE}) with $q_{k-1}$;
    
    \STATE Update the drift term by
    $$
    q_k
    =
    K q_{k-1}
    =
    \frac{f - \partial_t u(\cdot,T;q_{k-1})
    + \Delta g - C_p g}
    {\partial_{x_1} g}.
    $$
    
    \STATE Check the stopping criterion
    $\|q_k - q_{k-1}\|_{L^2(\Omega)} \le \varepsilon_0$ for some $\varepsilon_0 > 0$.
    \ENDFOR
    
    \STATE \textbf{Output} the recovered drift term $q_N$.
    
    \end{algorithmic}
    \end{algorithm}    

The experiments are used to show three main points:
(i) the convergence of the proposed iteration for noise-free data,
(ii) its stability under noisy terminal measurements,
and (iii) the monotone improvement of the iterates in the first few steps. 
The detailed examples are given as follows.

\begin{enumerate}[(a)]
  \item The smooth coefficient is chosen as
  \[
  q_{1}(x,y) = 1 + \sin(\pi x)\sin(\pi y).
  \]

  \item The piecewise constant coefficient is given by
  \[
  q_{2}(x,y) =
  \begin{cases}
  1.4, & |x-0.6|\le 0.18,\ \ |y-0.4|\le 0.18,\\
  1,   & \text{otherwise}.
  \end{cases}
  \]

  \item A binary coefficient supported on a Chinese-character-shaped region, namely
  \[
    q_3(x,y)=1+0.4\,\chi_{D_{\mathrm{char}}}(x,y),
  \]
  where $D_{\mathrm{char}}\subset(0,1)^2$ denotes the region shown in the corresponding figure.

\end{enumerate}

The potential constant is fixed as $C_p=5$, the final time is $T=1$,
and the stopping tolerance is set to $10^{-13}$.
The initial and boundary data are chosen in a compatible way.
Let
$a(t)=e^{\beta t}, \beta=1.$
The Dirichlet boundary conditions are prescribed by
\[
b_1(x,t)=a(t)e^x,\qquad b_3(x,t)=a(t)e^x,
\]
on \(y=0\) and \(y=1\), respectively.
The Neumann boundary conditions are given by
\[
b_2(y,t)=a(t)e,\qquad b_4(y,t)=-a(t),
\]
on \(x=1\) and \(x=0\), respectively, where \(b_4\) is defined with respect to the outward normal derivative.
The initial value is taken as
\[
u_0(x,y)=a(0)e^x=e^x,
\]
and is further corrected at the boundary nodes so that  the discrete Dirichlet and Neumann compatibility conditions at \(t=0\) are satisfied.
The source term is given by
\[
f(x,y)= 5\sin(\pi x)\sin(\pi y).
\]
To avoid an inverse crime, the terminal data are generated on a finer
$100\times100$ grid, while the inverse reconstruction is solved on a coarser grid $60\times60$.
All figures are displayed on the reconstruction grid used in the corresponding experiment.

For noisy-data experiments, the terminal data are perturbed by
\[
g^\delta = g + \delta \|g\|_{L^\infty(\Omega)}\,\xi,
\]
where $\xi$ is a random function with values in $[-\tfrac12,\tfrac12]$.
In the implementation, $\delta = 2\times10^{-2},\,2\times10^{-3},\,2\times10^{-4}$
correspond to noise levels $1\%,\,0.1\%,\,0.01\%$, respectively,
and $\delta=6\times10^{-2}$ corresponds to $3\%$ noise.
Since the reconstruction formula involves differentiated quantities, direct use of noisy terminal data may cause severe instability. Therefore, in the noisy experiments, 
we first smooth the terminal observation $g^\delta$. To this end, we employ a smoothing strategy related to \cite{chen2022} to denoise $g^\delta$ and obtain a stable approximation of $\Delta g$.
In all noisy-data figures, the reconstructions in the first row are shown after $10$ iterations,
while the second row displays the first three iterates under $3\%$ noise.
The reconstruction error at iteration $k$ is measured by the relative $L^2$-error
\[
\mathrm{RelErr}(k)
=
\frac{\|q^{(k)}-q_{\mathrm{true}}\|_{L^2(\Omega)}}
{\|q_{\mathrm{true}}\|_{L^2(\Omega)}}.
\]

\subsection{Smooth Example}

Figure~\ref{fig:smooth_clean} shows the noise-free results for the smooth example.
The reconstructed coefficient agrees very well with the exact one, and the relative error
decreases rapidly in the first few iterations.
This indicates that for a smooth drift field the proposed fixed-point iteration converges fast
and gives an accurate reconstruction.

Figure~\ref{fig:smooth_noise} shows the results for noisy terminal data.
The first row gives the reconstructions after $10$ iterations for noise levels
$1\%$, $0.1\%$, and $0.01\%$.
As the noise level decreases, the reconstructed profile becomes closer to the exact one,
and the level sets become smoother and more symmetric.
The second row shows the initial guess and the next two iterates for the case of $3\%$ noise.
Although the noise is relatively large, the global shape is already captured by the initial guess,
and the iterates improve step by step.
This agrees with the monotone behavior predicted by the theory.

\begin{figure}[H]
  \centering
  \subfigure[Exact coefficient]{%
      \includegraphics[width=0.32\textwidth]{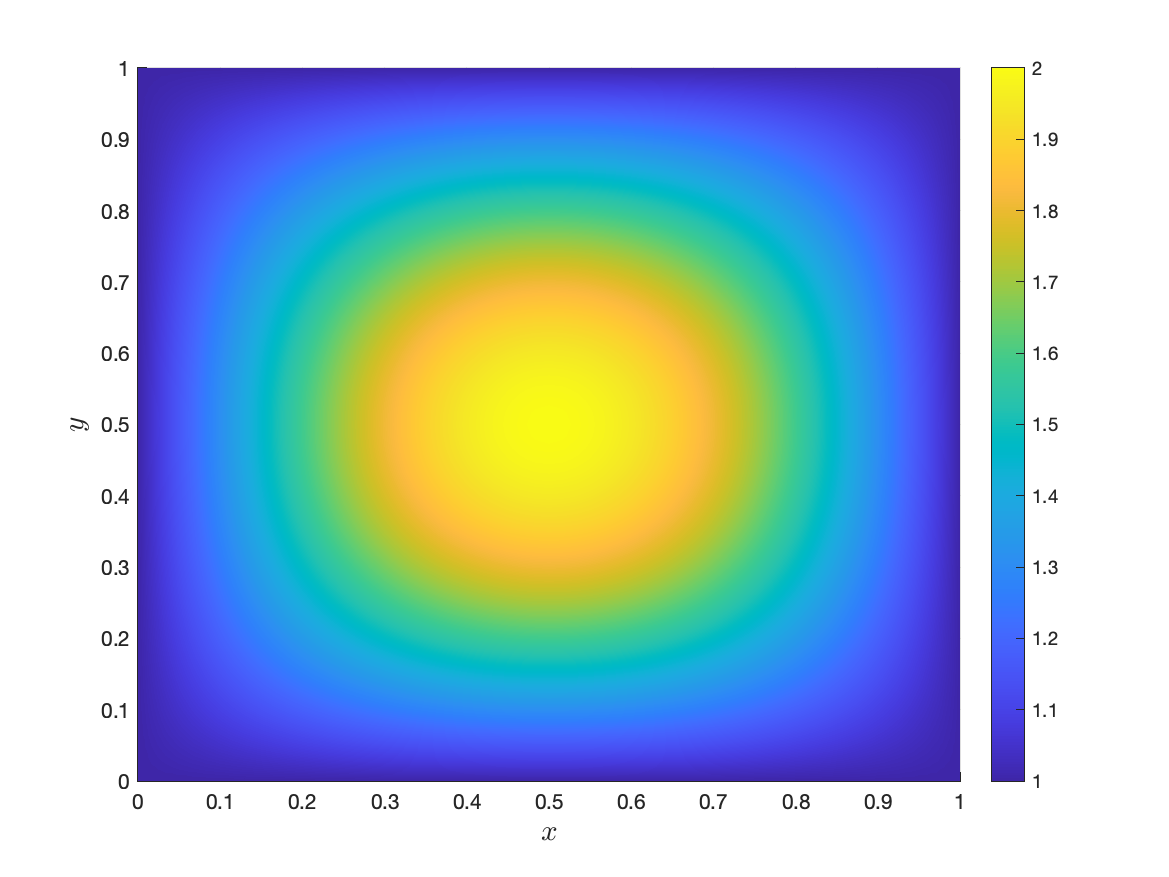}}
  \hfill
  \subfigure[Reconstructed coefficient]{%
      \includegraphics[width=0.32\textwidth]{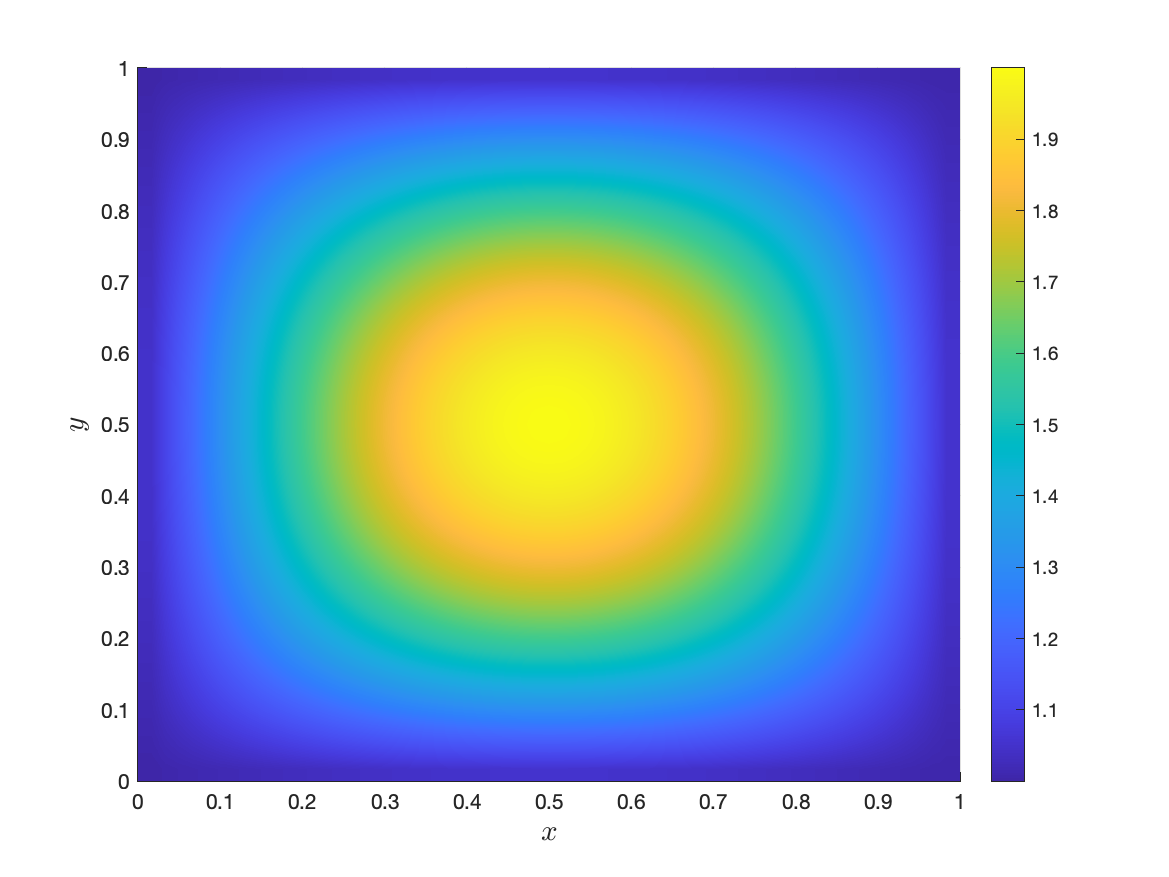}}
  \hfill
  \subfigure[Relative error]{%
      \includegraphics[width=0.32\textwidth]{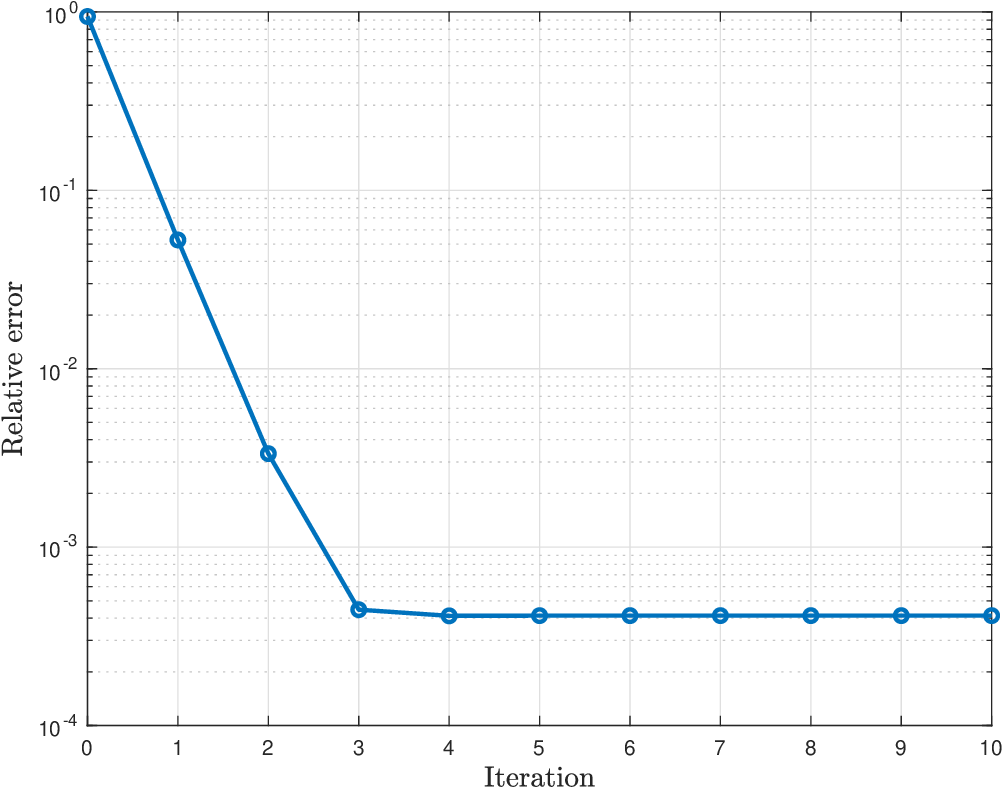}}
  \caption{Noise-free reconstruction for the smooth example.}
  \label{fig:smooth_clean}
\end{figure}

\begin{figure}[H]
  \centering
  \subfigure[$1\%$ noise, $k=10$]{%
      \includegraphics[width=0.32\textwidth]{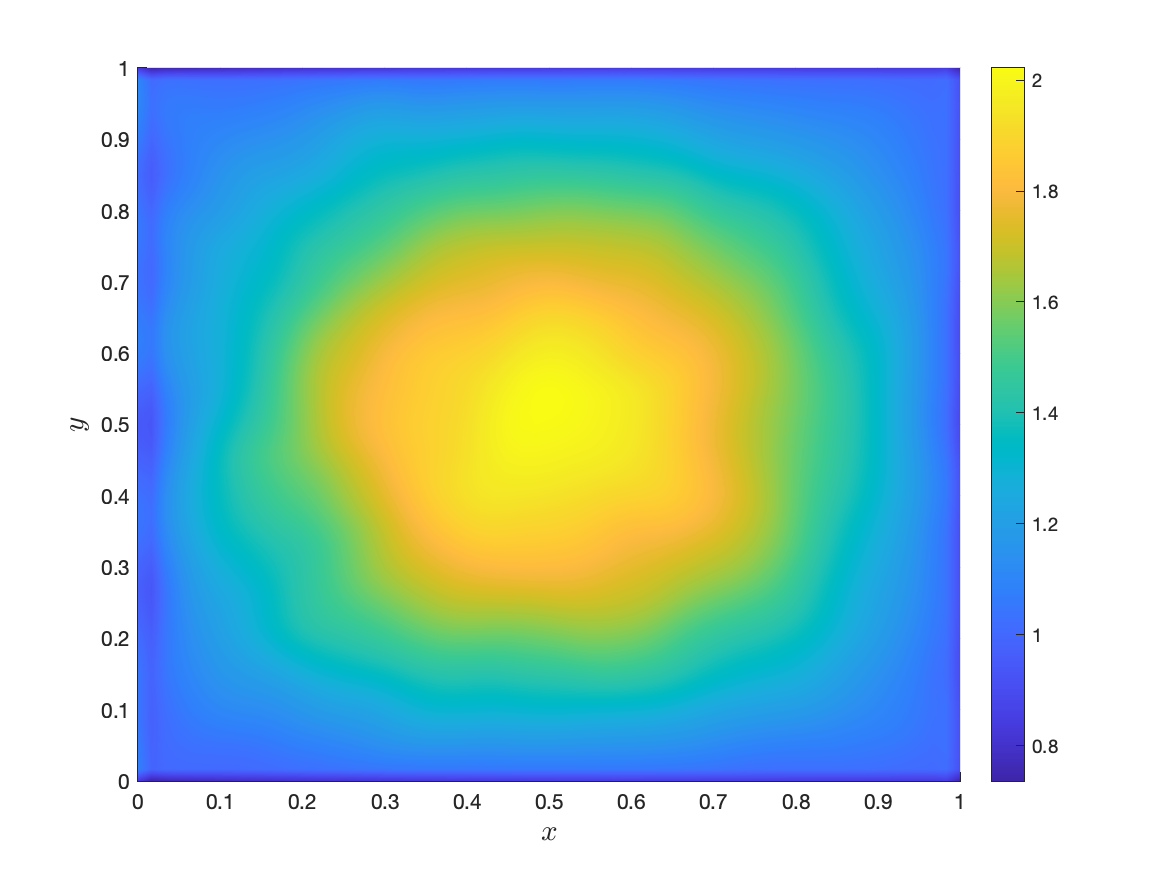}}
  \hfill
  \subfigure[$0.1\%$ noise, $k=10$]{%
      \includegraphics[width=0.32\textwidth]{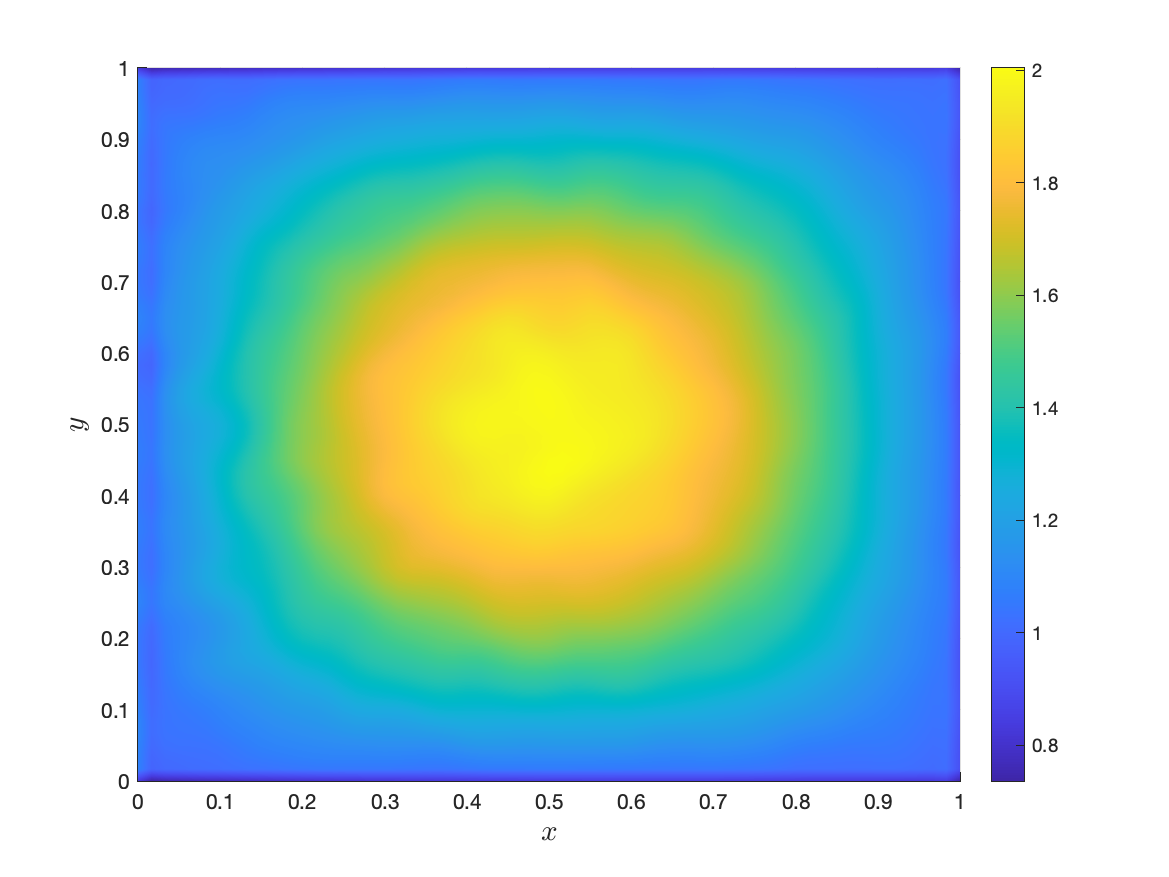}}
  \hfill
  \subfigure[$0.01\%$ noise, $k=10$]{%
      \includegraphics[width=0.32\textwidth]{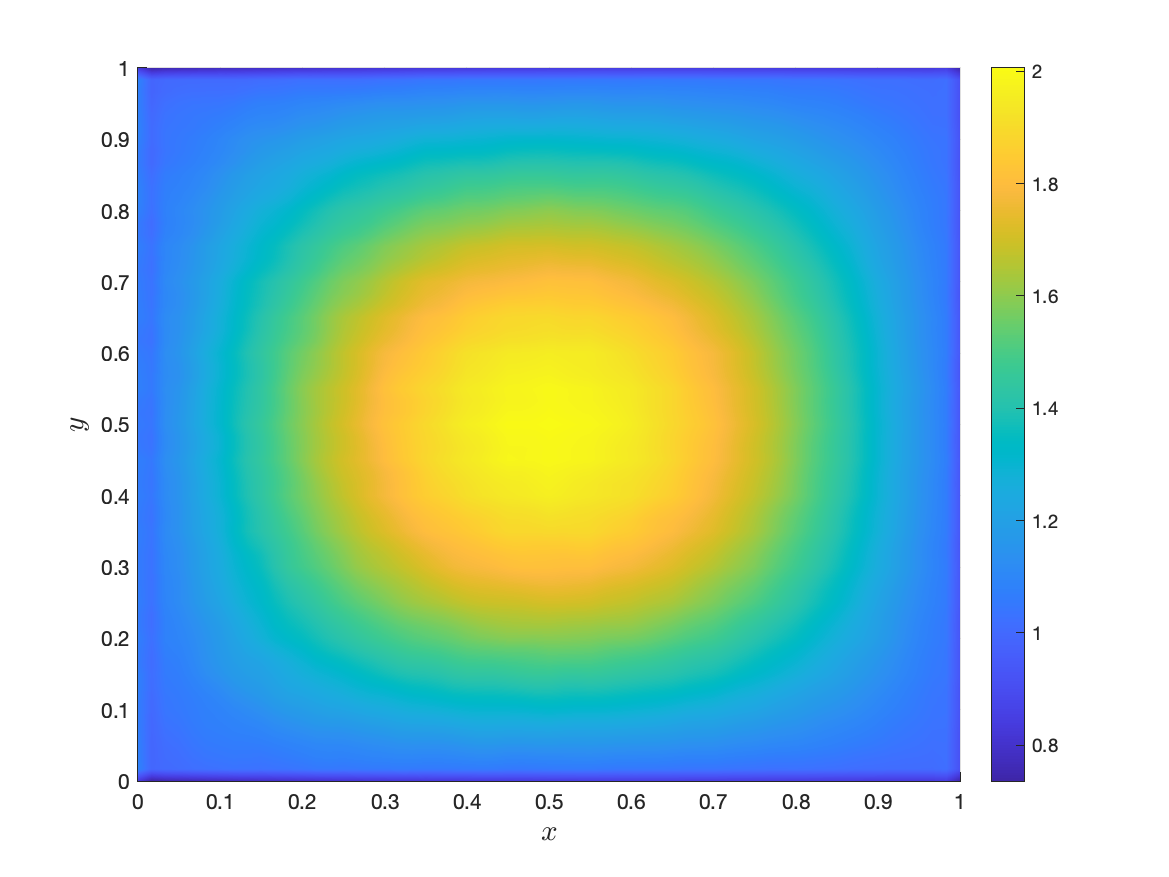}}
  \vspace{0.3cm}
  \subfigure[$3\%$ noise, $k=0$]{%
      \includegraphics[width=0.3\textwidth]{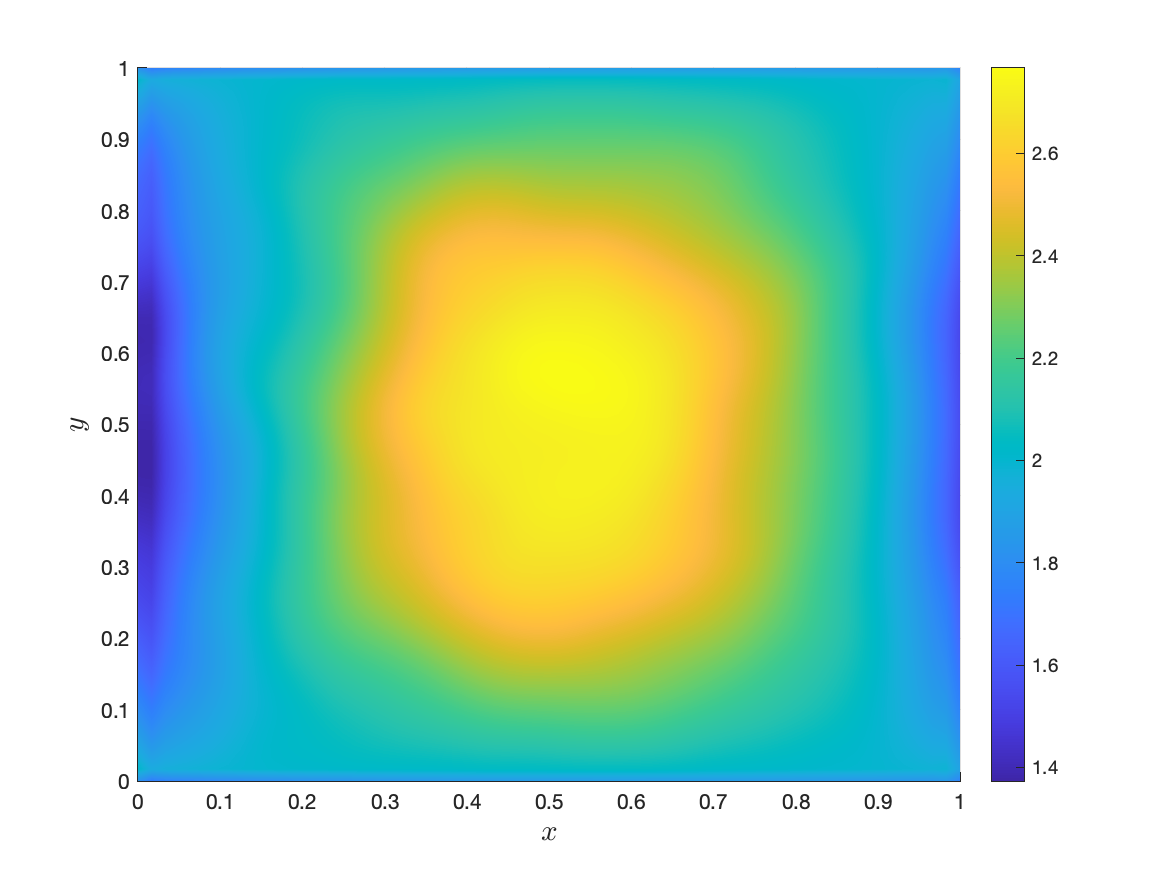}}
  \hfill
  \subfigure[$3\%$ noise, $k=1$]{%
      \includegraphics[width=0.3\textwidth]{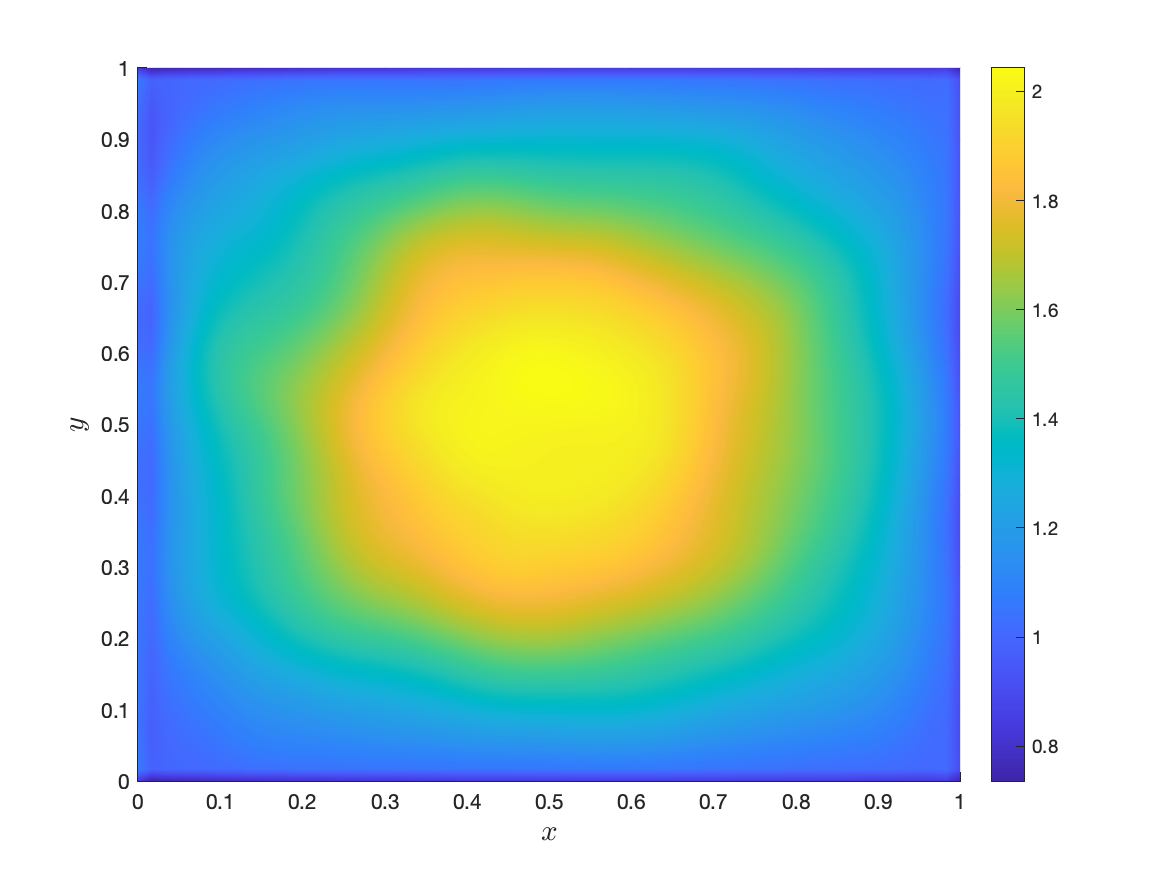}}
  \hfill
  \subfigure[$3\%$ noise, $k=2$]{%
      \includegraphics[width=0.3\textwidth]{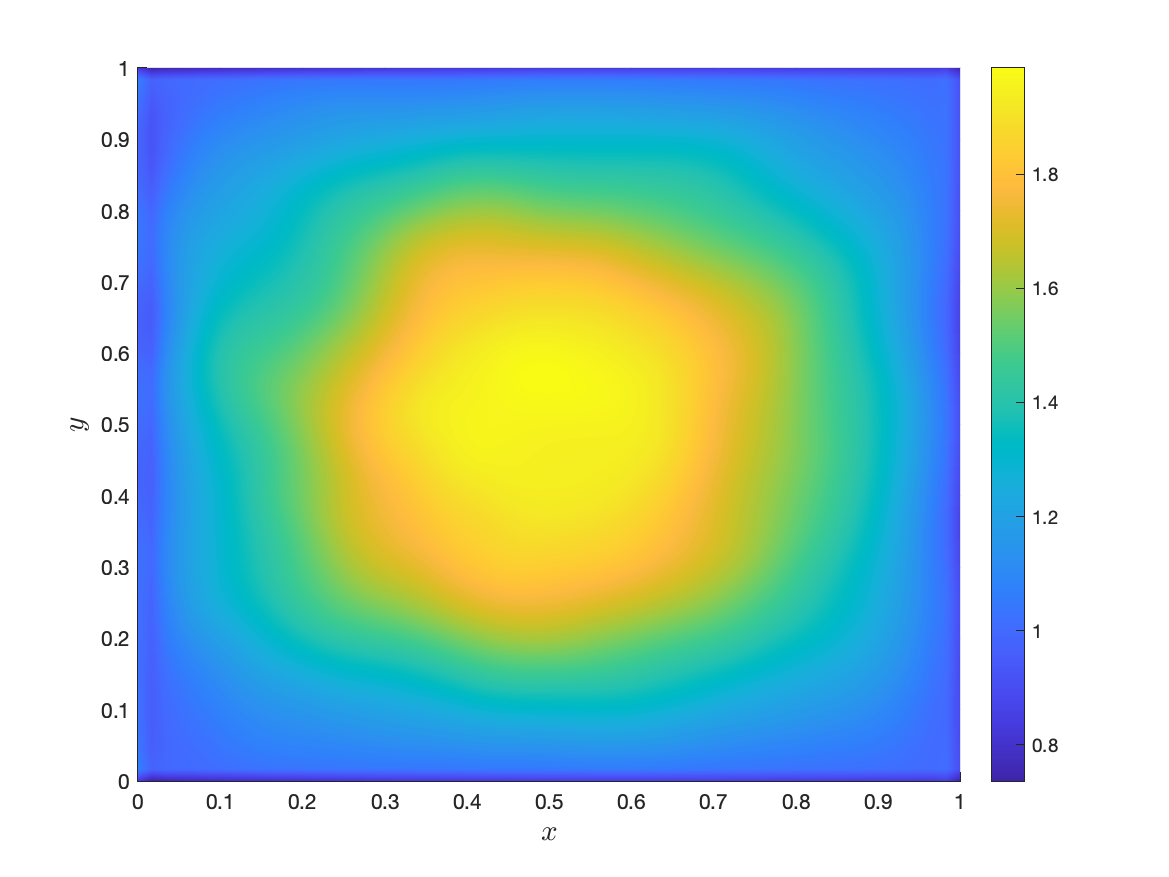}}
  \caption{Reconstructions for the smooth example with noisy data. The first row shows the results after $10$ iterations for three noise levels; the second row shows the first three iterates for $3\%$ noise.}
  \label{fig:smooth_noise}
\end{figure}

\subsection{Piecewise Constant Example}

Figure~\ref{fig:pc_clean} presents the noise-free results for the square example.
The method correctly recovers the location and contrast of the inclusion,
and the relative error decreases monotonically with the iteration number.
Although the exact coefficient is discontinuous, the reconstructed interface remains sharp,
with only mild smoothing near the corners caused by the coarse inversion grid and the discrete solver.

Figure~\ref{fig:pc_noise} reports the reconstructions for noisy data.
The first row shows the results after $10$ iterations for noise levels
$1\%$, $0.1\%$, and $0.01\%$.
The support of the square inclusion is clearly identified in all three cases,
and the reconstruction becomes more accurate as the noise level decreases.
The second row displays the first three iterates for $3\%$ noise.
One can see that the background distortion is reduced from one step to the next,
while the shape of the square becomes clearer.
This provides a direct numerical illustration of the monotone improvement of the iteration.

\begin{figure}[H]
  \centering
  \subfigure[Exact coefficient]{%
    \includegraphics[width=0.32\textwidth]{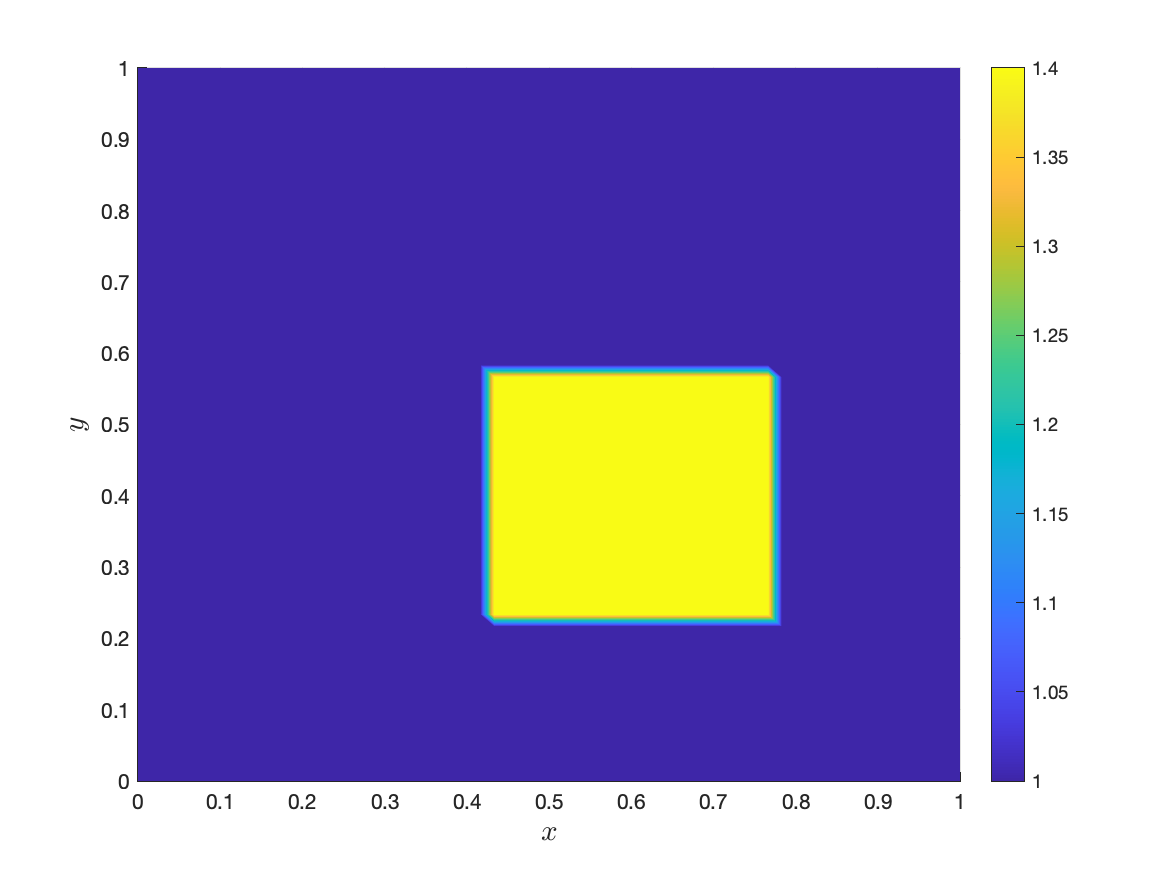}}
  \hfill
  \subfigure[Reconstructed coefficient]{%
    \includegraphics[width=0.32\textwidth]{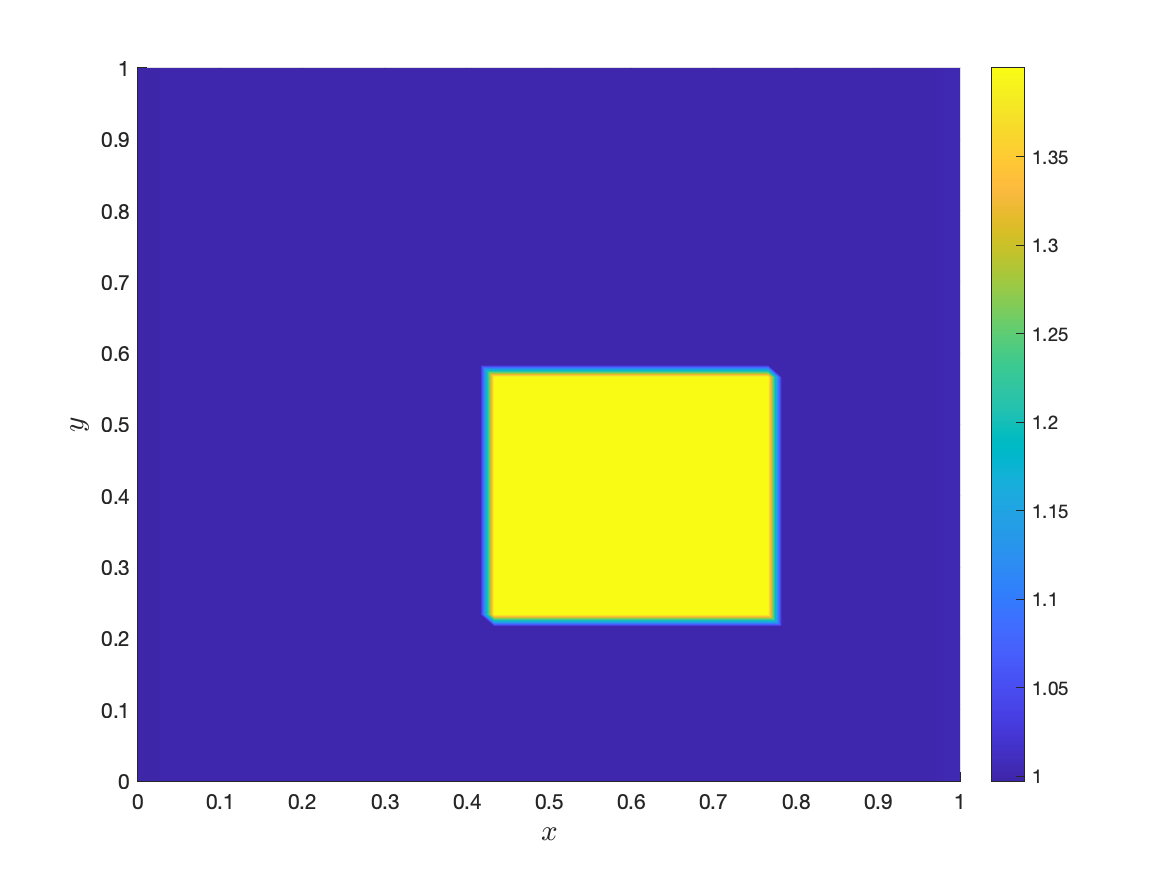}}
  \hfill
  \subfigure[Relative error]{%
    \includegraphics[width=0.32\textwidth]{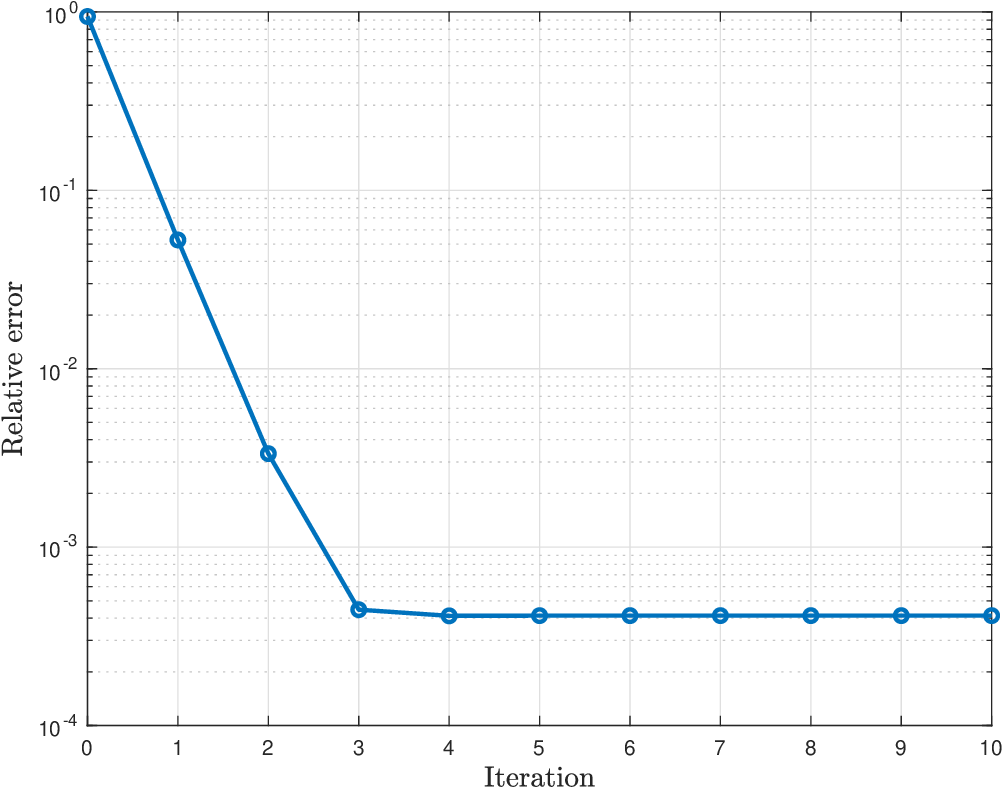}}
    \caption{Noise-free reconstruction for the piecewise constant example.}
    \label{fig:pc_clean}
\end{figure}

\begin{figure}[H]
  \centering
  \subfigure[$1\%$ noise, $k=10$]{%
      \includegraphics[width=0.32\textwidth]{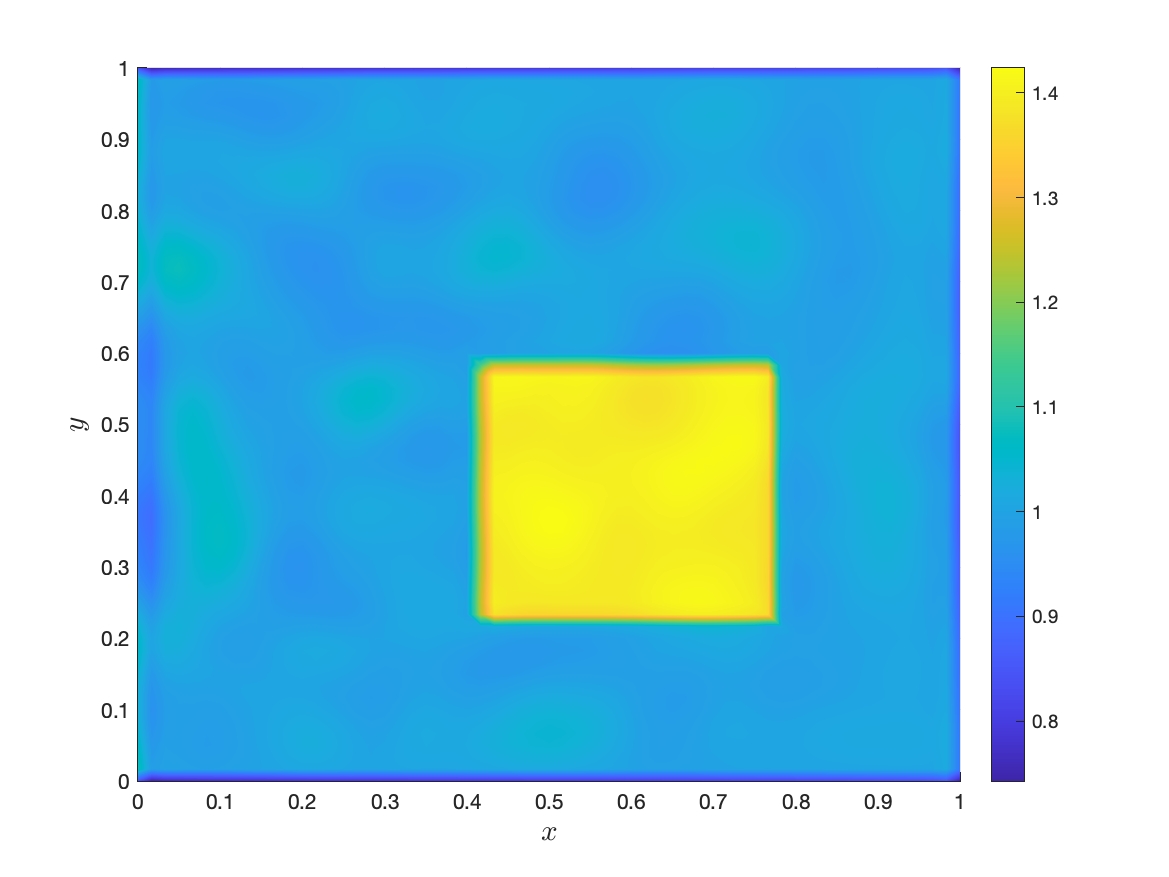}}
  \hfill
  \subfigure[$0.1\%$ noise, $k=10$]{%
      \includegraphics[width=0.32\textwidth]{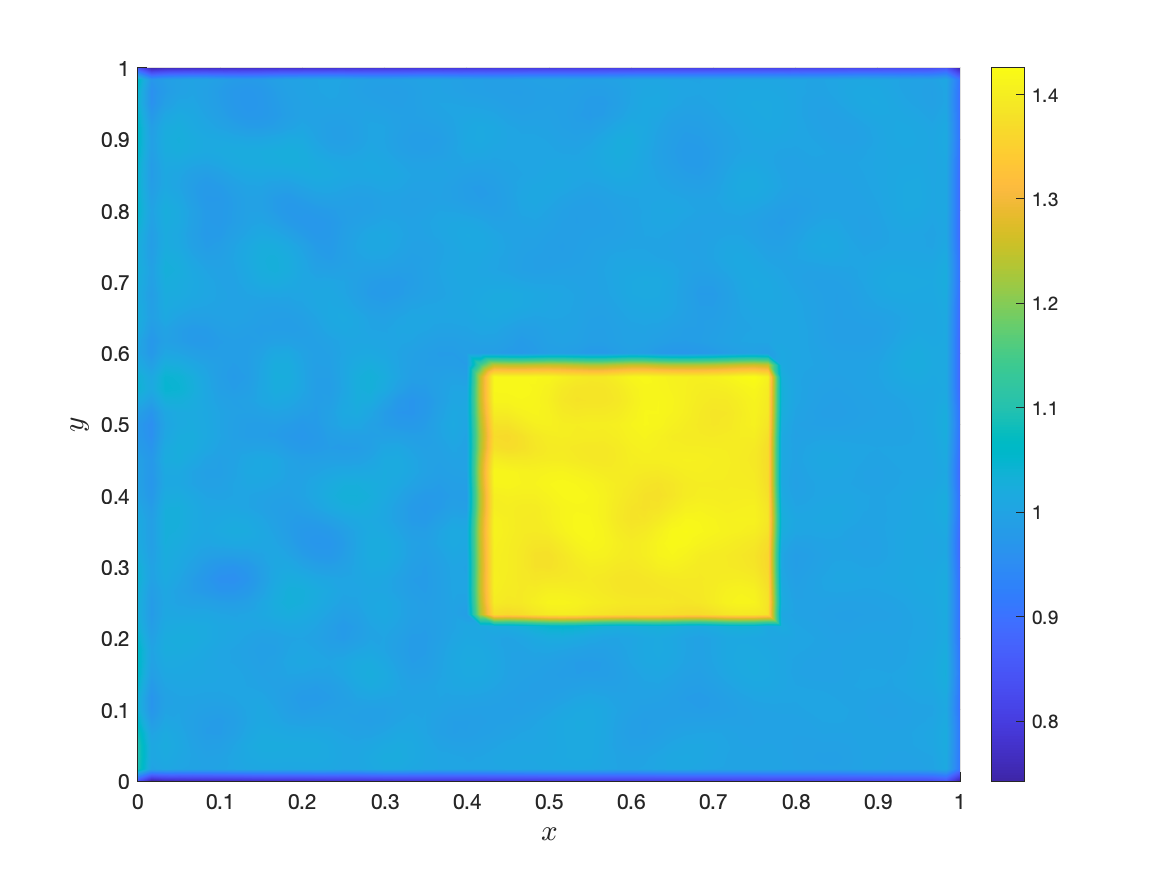}}
  \hfill
  \subfigure[$0.01\%$ noise, $k=10$]{%
      \includegraphics[width=0.32\textwidth]{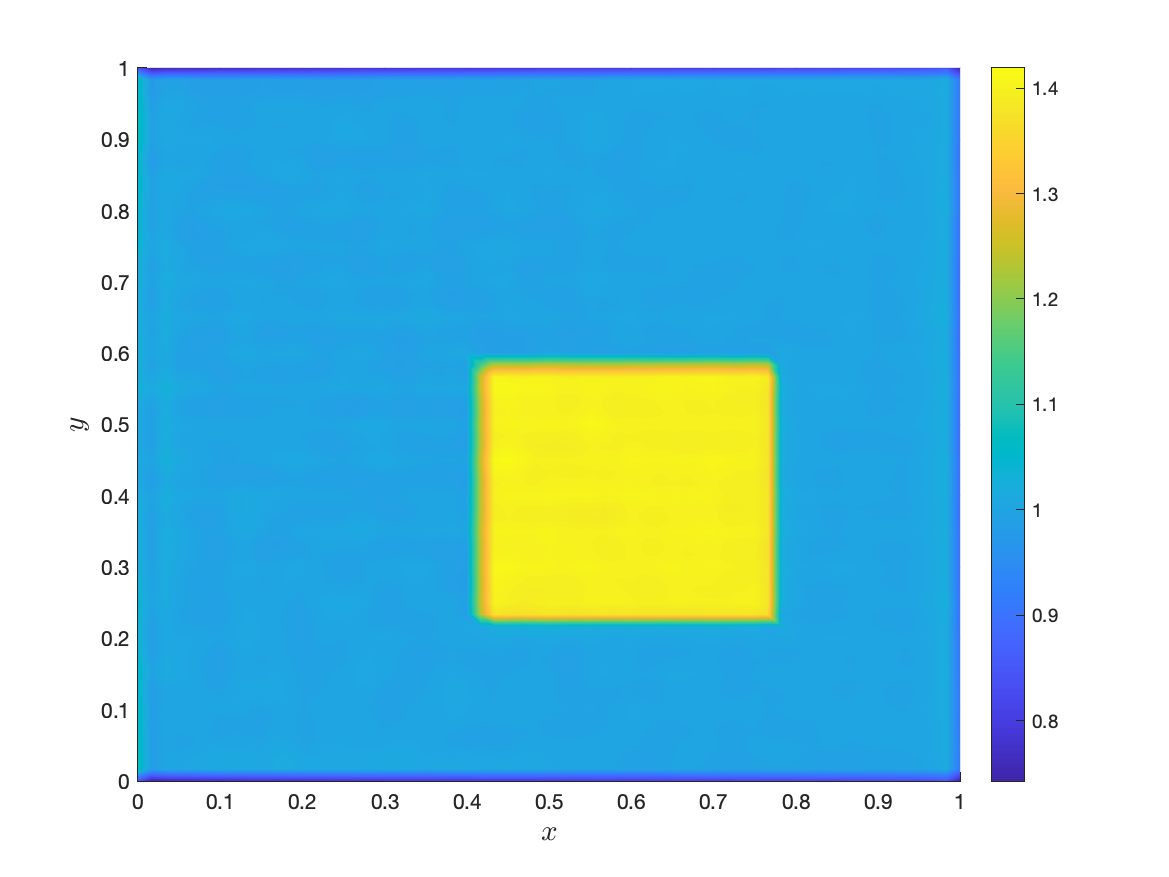}}
      \vspace{0.3cm}
  \subfigure[$3\%$ noise, $k=0$]{%
      \includegraphics[width=0.3\textwidth]{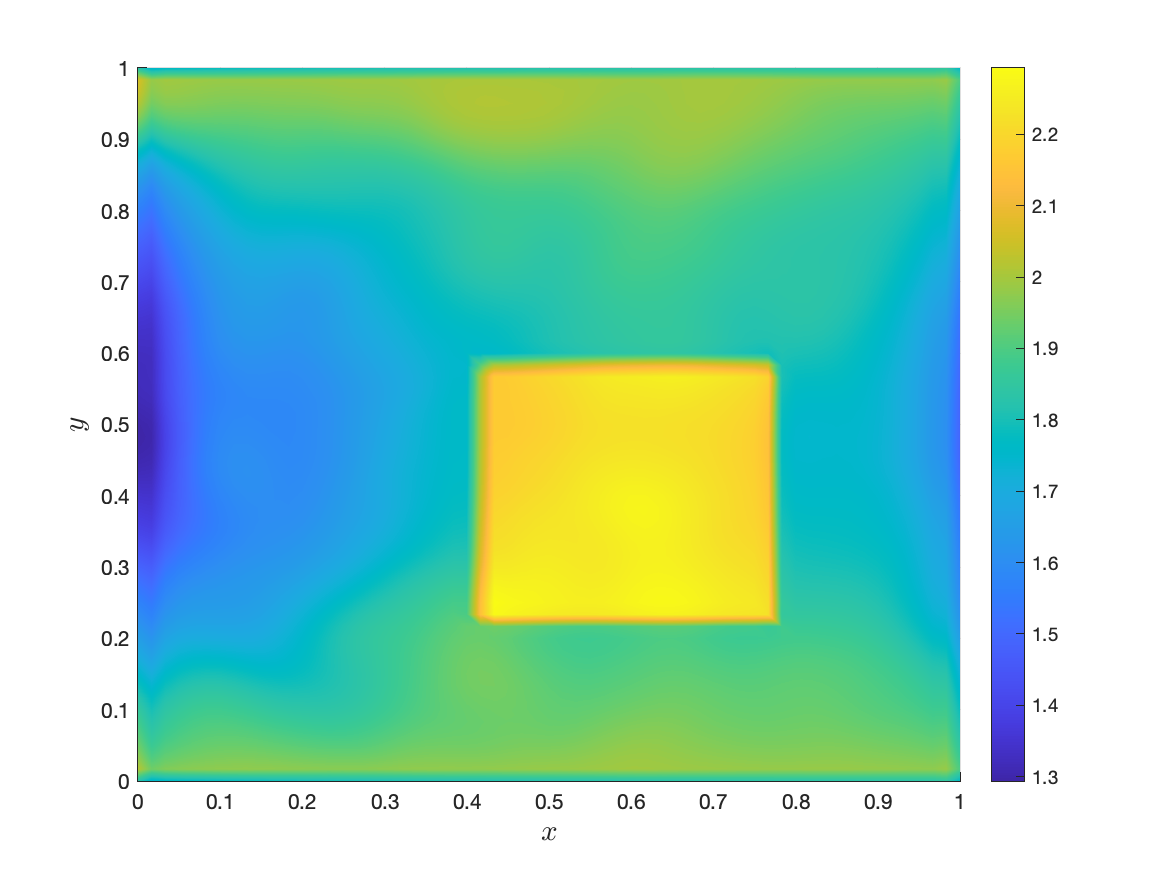}}
  \hfill
  \subfigure[$3\%$ noise, $k=1$]{%
      \includegraphics[width=0.3\textwidth]{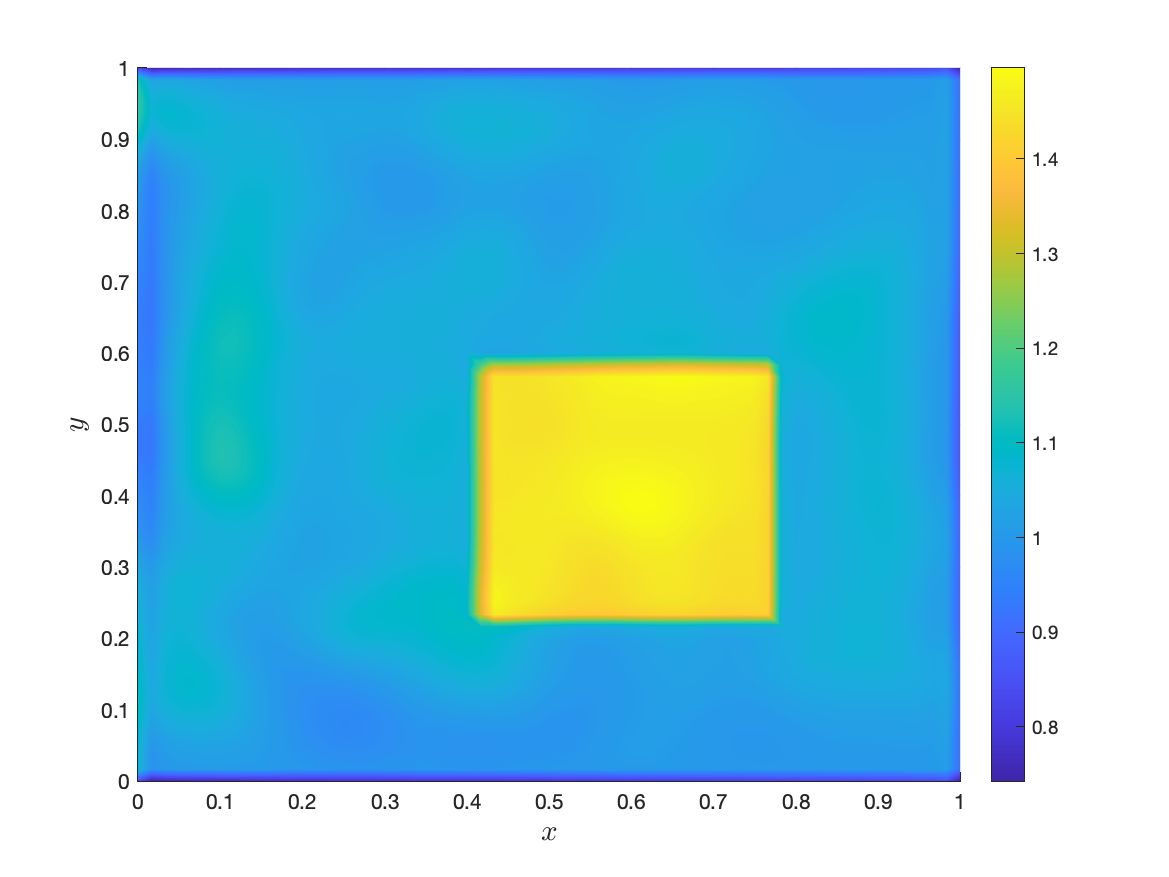}}
  \hfill
  \subfigure[$3\%$ noise, $k=2$]{%
      \includegraphics[width=0.3\textwidth]{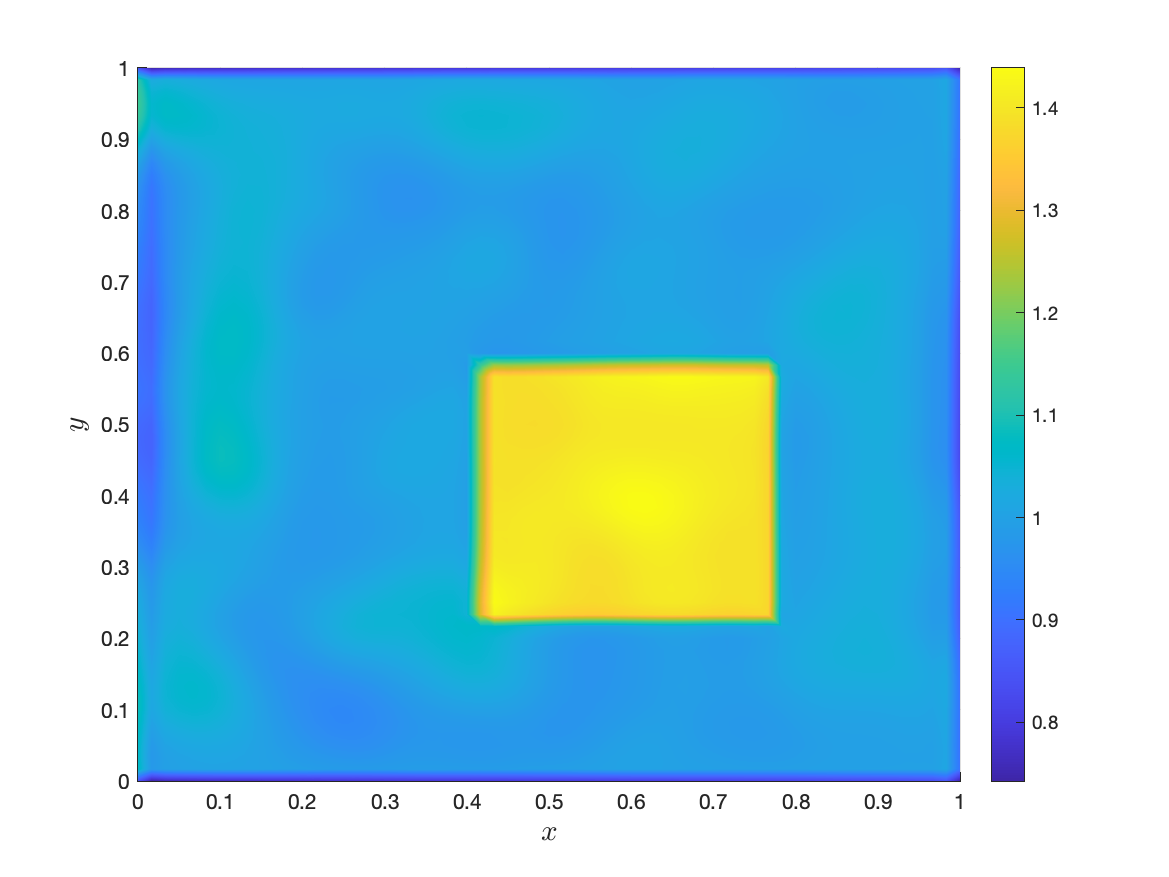}}
      \caption{Reconstructions for the piecewise constant example with noisy data. The first row shows the results after $10$ iterations for three noise levels; the second row shows the first three iterates for $3\%$ noise.}
      \label{fig:pc_noise}
\end{figure}

\subsection{Chinese Character Example}

The third example is more complicated because the exact coefficient contains several corners,
thin structures, and two inner holes.
Figure~\ref{fig:char_clean} shows the noise-free reconstruction.
The main geometry of the target is well recovered, including the two inner holes
and the central vertical bar.
The relative error decreases rapidly in the first few iterations and then becomes nearly flat.

Figure~\ref{fig:char_noise} shows the results for noisy terminal data.
The first row gives the reconstructions after $10$ iterations for noise levels
$1\%$, $0.1\%$, and $0.01\%$.
Even in the presence of noise, the main topological features remain visible,
and the reconstruction improves as the noise level decreases.
The second row shows the first three iterates for $3\%$ noise.
Although some background variation is still visible, the main character-shaped structure is already well captured at the initial step.
As the iteration proceeds from $k=0$ to $k=2$, the background becomes more uniform and the coefficient is recovered more clearly, showing a step-by-step improvement of the reconstruction.
This suggests that the proposed method remains stable even for nonsmooth coefficients
with more complicated geometry.

\begin{figure}[H]
  \centering
  \subfigure[Exact coefficient]{%
    \includegraphics[width=0.32\textwidth]{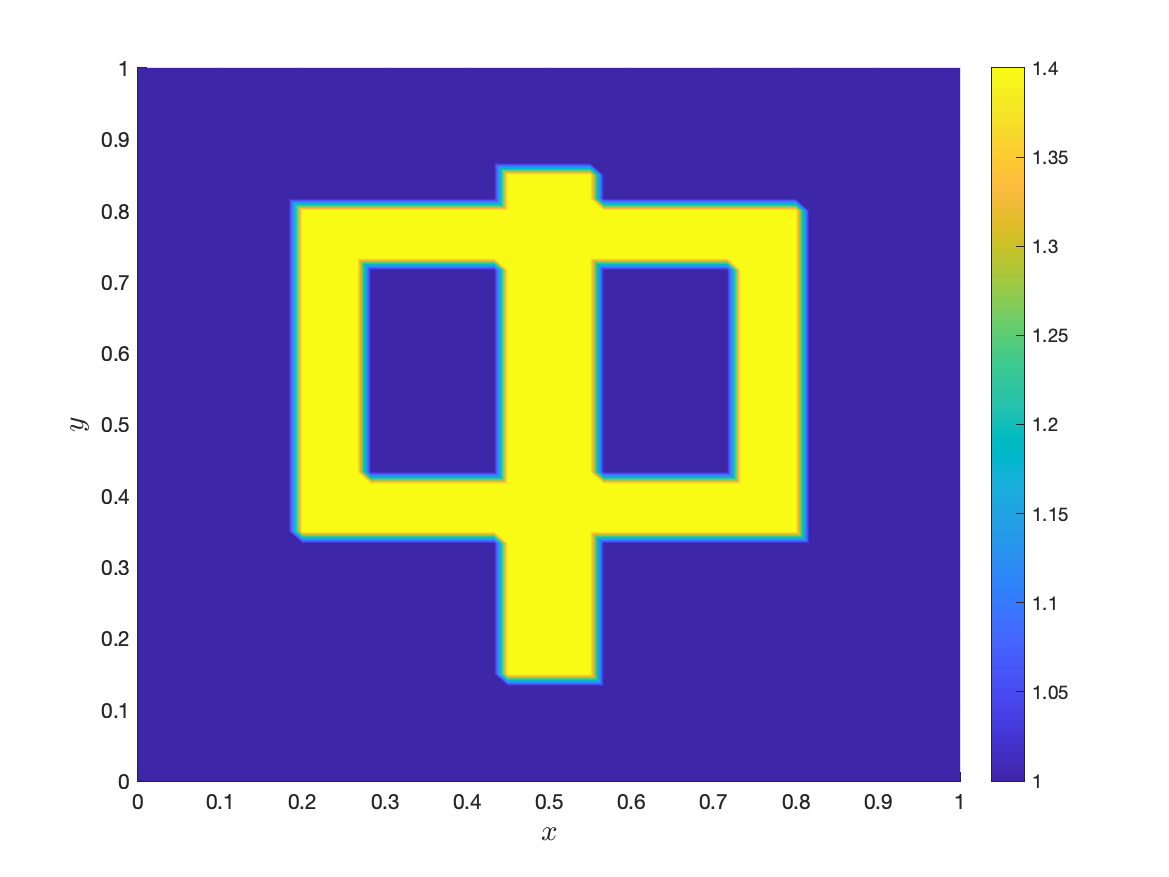}}
  \hfill
  \subfigure[Reconstructed coefficient]{%
    \includegraphics[width=0.32\textwidth]{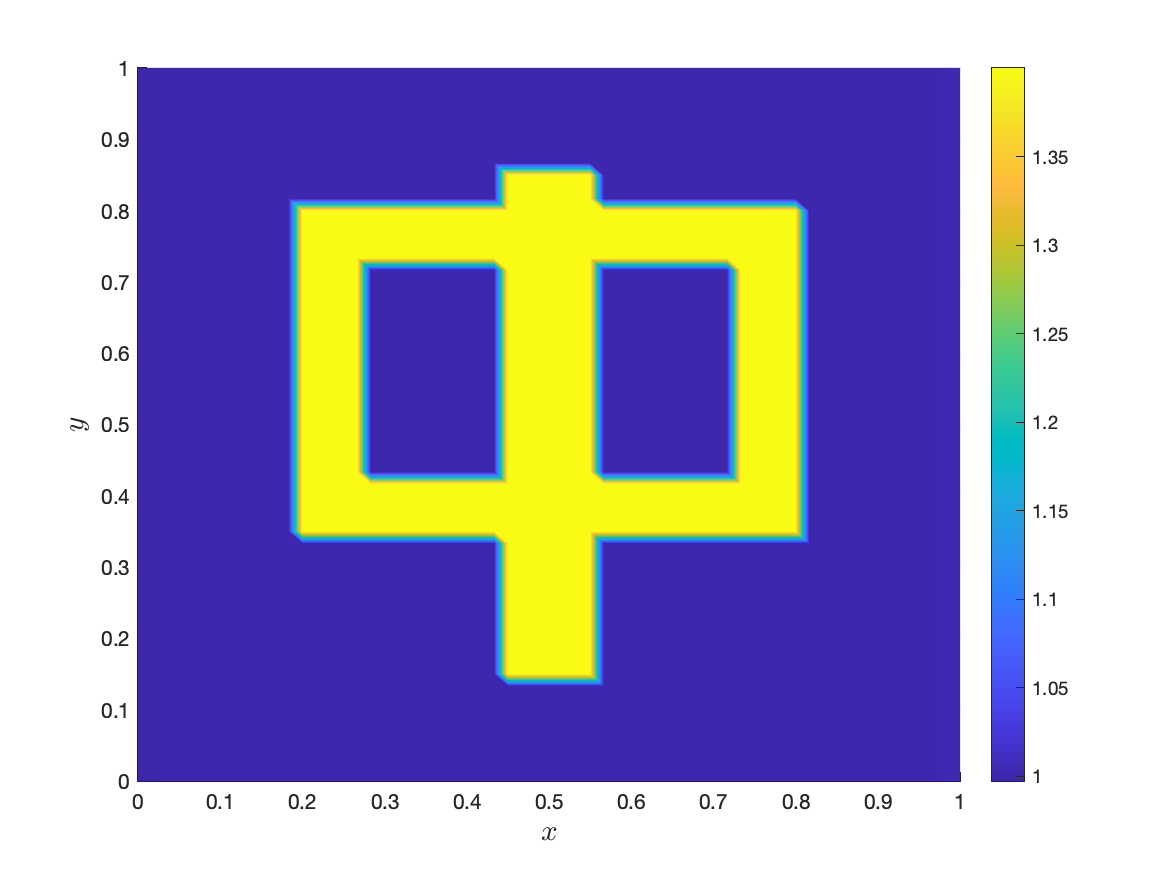}}
  \hfill
  \subfigure[Relative error]{%
    \includegraphics[width=0.32\textwidth]{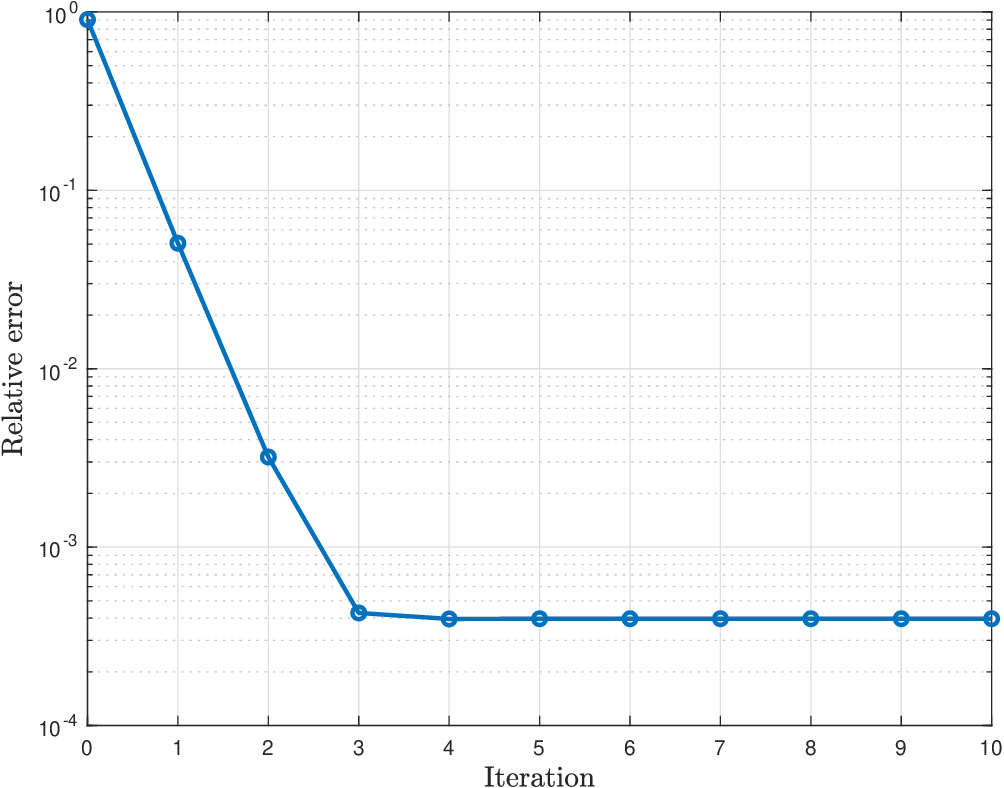}}
  \caption{Noise-free reconstruction for the Chinese character example.}
    \label{fig:char_clean}
\end{figure}

\begin{figure}[H]
  \centering
  \subfigure[$1\%$ noise, $k=10$]{%
      \includegraphics[width=0.32\textwidth]{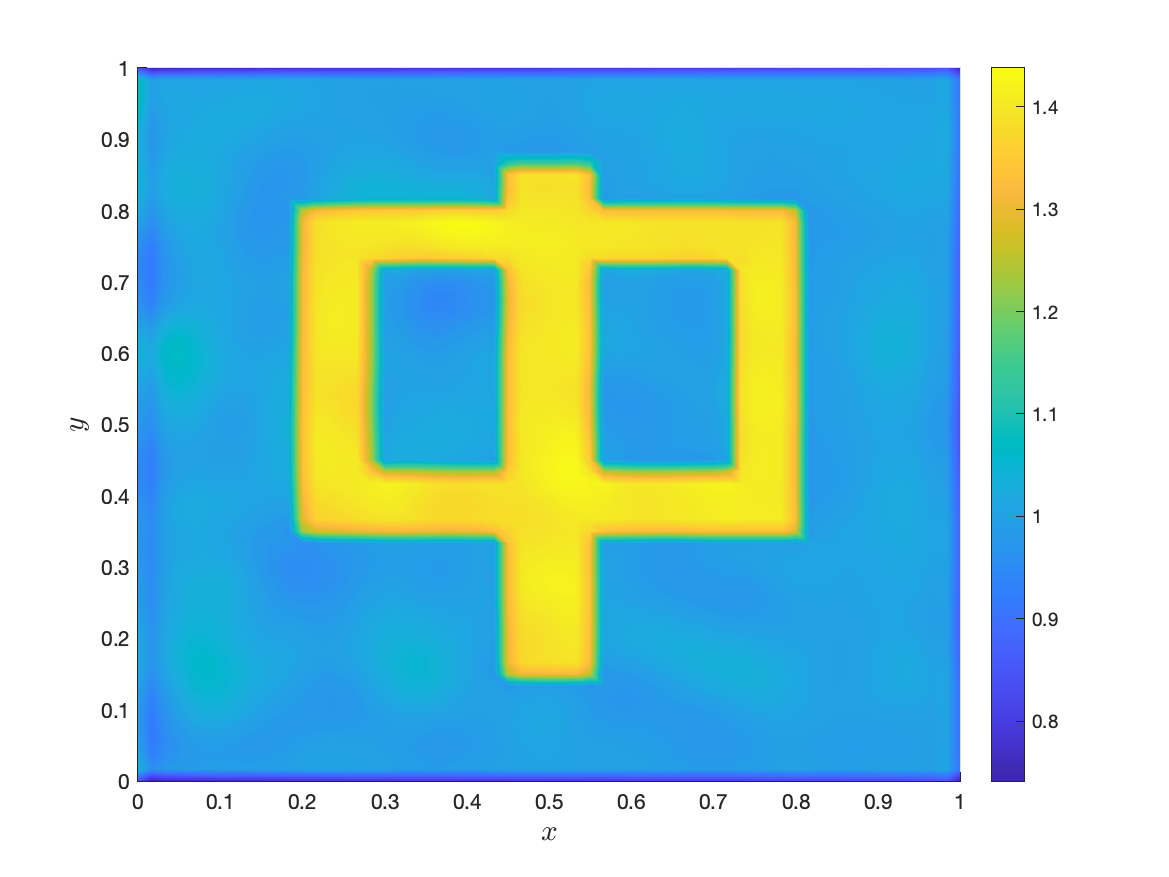}}
  \hfill
  \subfigure[$0.1\%$ noise, $k=10$]{%
      \includegraphics[width=0.32\textwidth]{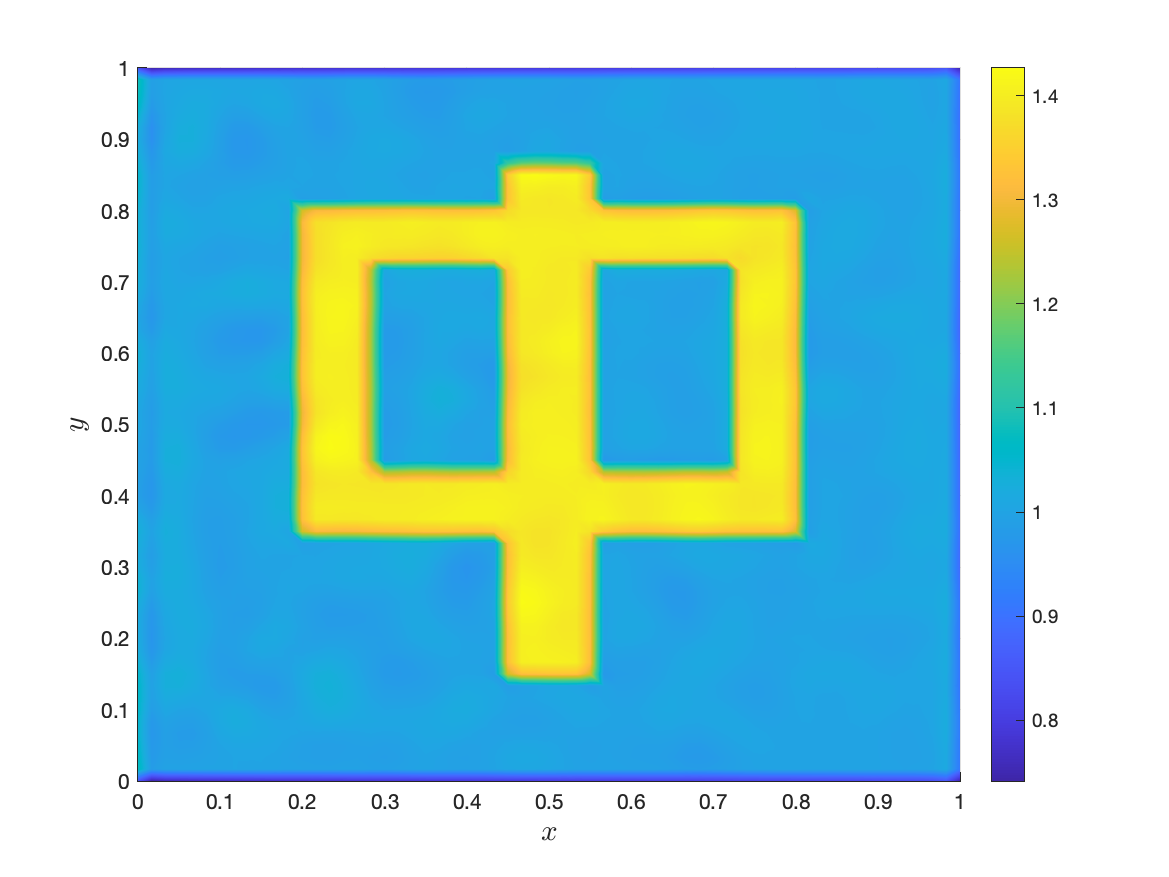}}
  \hfill
  \subfigure[$0.01\%$ noise, $k=10$]{%
      \includegraphics[width=0.32\textwidth]{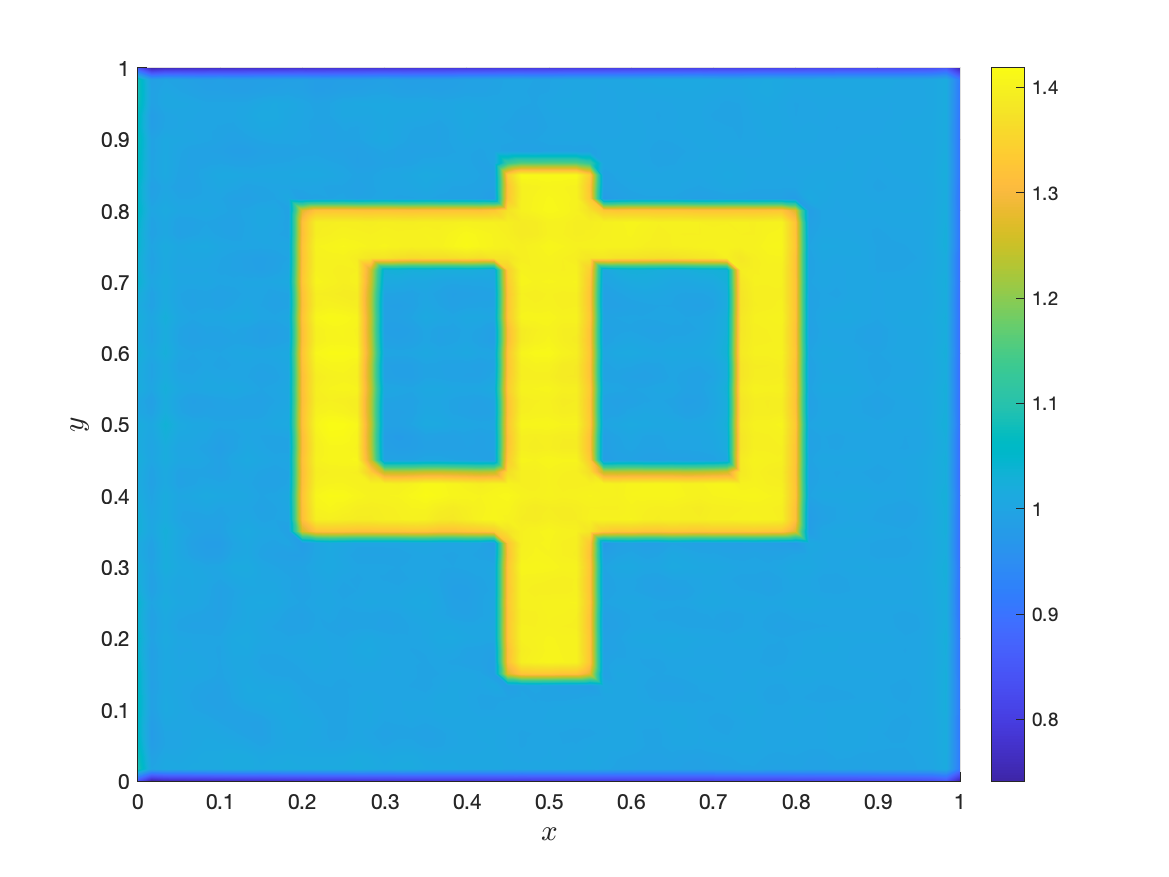}}
      \vspace{0.3cm}
  \subfigure[$3\%$ noise, $k=0$]{%
      \includegraphics[width=0.3\textwidth]{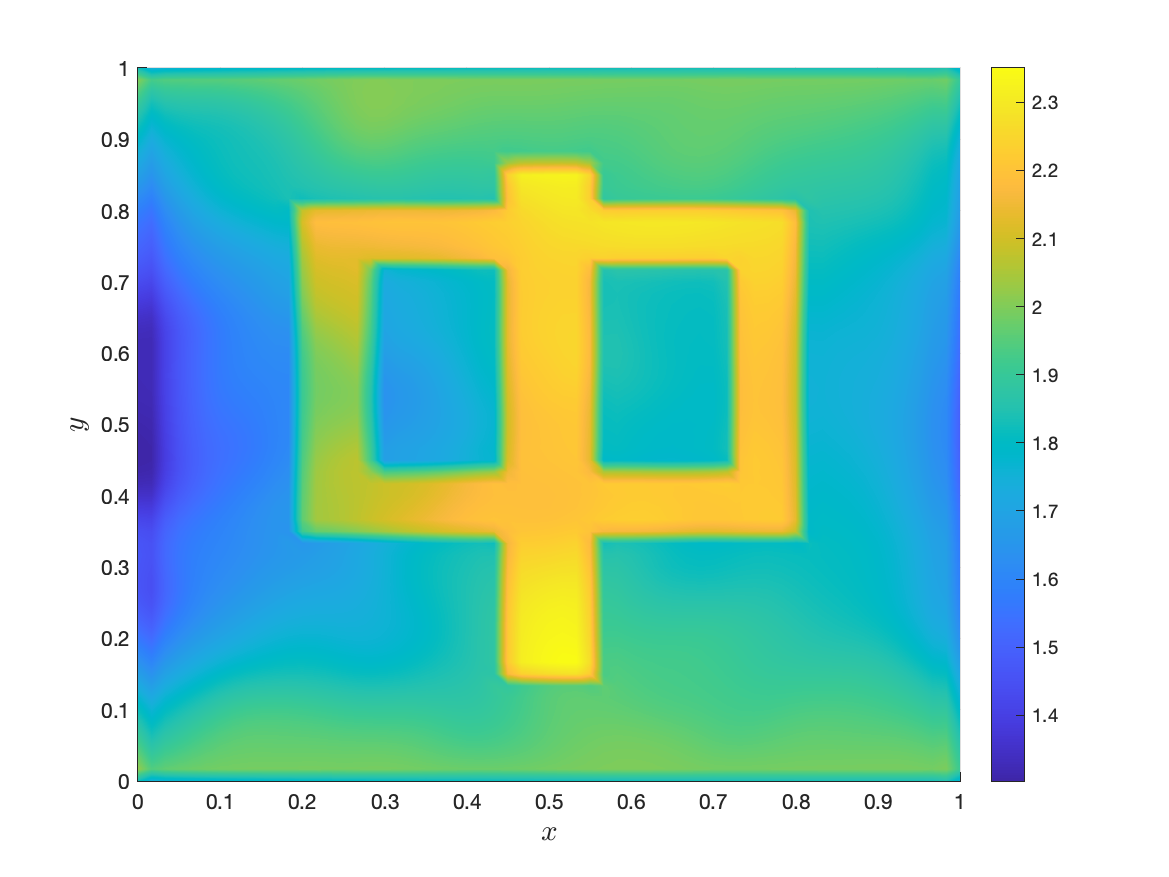}}
  \hfill
  \subfigure[$3\%$ noise, $k=1$]{%
      \includegraphics[width=0.3\textwidth]{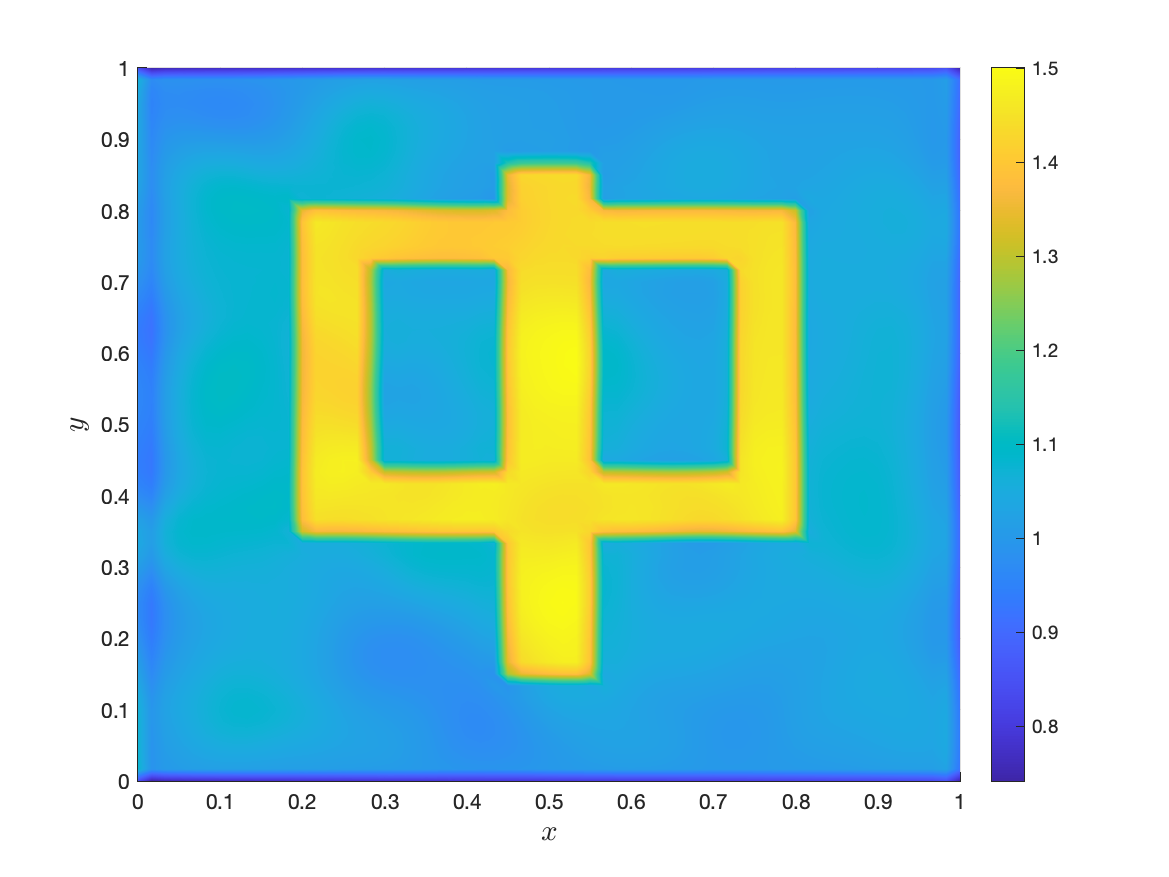}}
  \hfill
  \subfigure[$3\%$ noise, $k=2$]{%
      \includegraphics[width=0.3\textwidth]{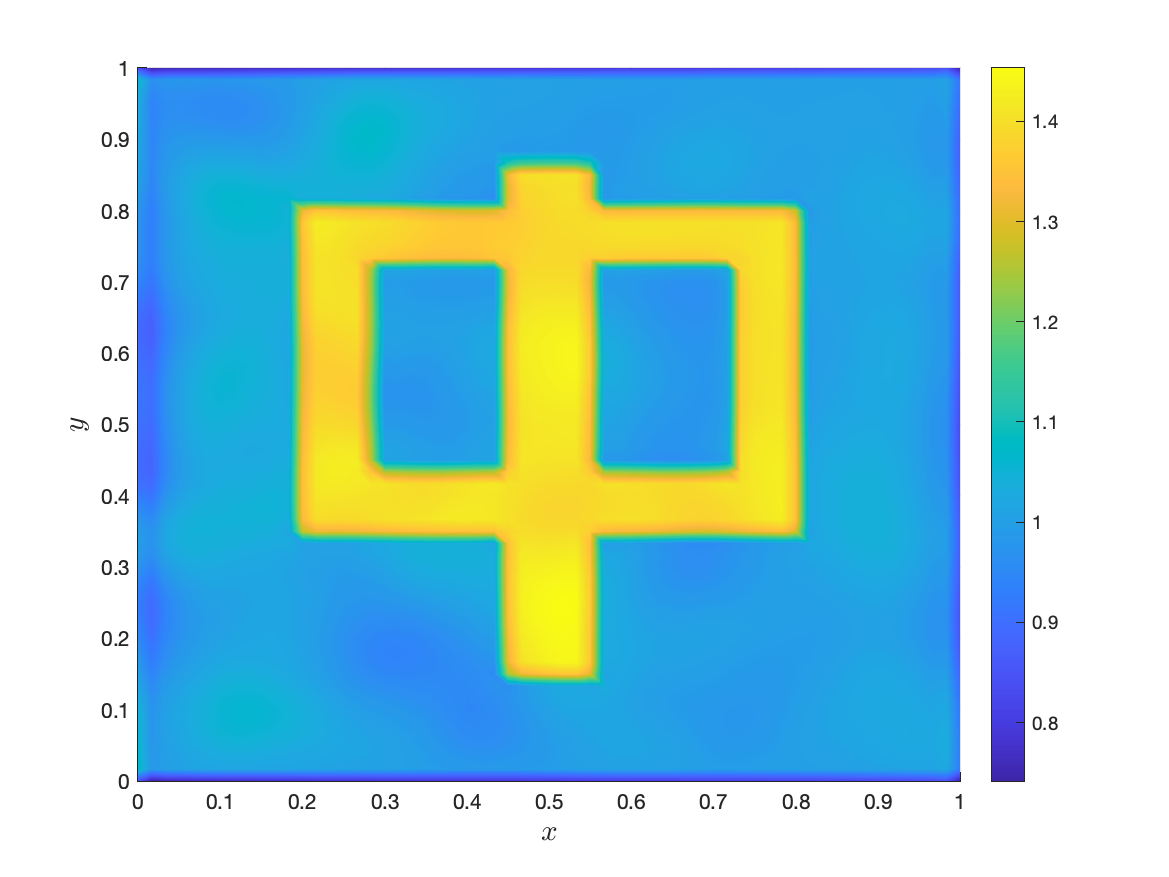}}
  \caption{Reconstructions for the Chinese character example with noisy data. The first row shows the results after $10$ iterations for three noise levels; the second row shows the first three iterates for $3\%$ noise.}
  \label{fig:char_noise}
\end{figure}

\section*{Acknowledgments.}
Zhidong Zhang is supported by the National Key Research and Development Plan of China (Grant No. 2023YFB3002400). Wenlong Zhang is partially supported by the National Natural Science Foundation of China under grant numbers No.12371423, No.12241104 and No.12561160122.

\bibliographystyle{abbrv}
\bibliography{ref}

\end{document}